\documentclass[10pt]{amsart}

\usepackage{amsmath,amssymb,amsthm,mathtools}
\usepackage{hyperref}
\usepackage[margin=3.6cm]{geometry}

\title[]{compact Vaisman and l.c.K. manifolds with nonnegative bisectional curvature}
\author{Jiangtao Li}
\address{Department of Mathematics, University of California, San Diego, 9500 Gilman Drive, La Jolla, CA 92093, USA}
\email{jil320@ucsd.edu}

\theoremstyle{plain}
\newtheorem{theorem}{Theorem}[section]
\newtheorem{lemma}[theorem]{Lemma}

\newtheorem{corollary}[theorem]{Corollary}

\theoremstyle{definition}
\newtheorem{definition}[theorem]{Definition}
\newtheorem{example}[theorem]{Example}

\theoremstyle{remark}
\newtheorem{remark}[theorem]{Remark}

\numberwithin{equation}{section}

\DeclareMathOperator{\C}{\mathbb{C}}
\DeclareMathOperator{\Z}{\mathbb{Z}}
\DeclareMathOperator{\R}{\mathbb{R}}

\DeclareMathOperator{\CP}{\mathbb{CP}}

\DeclareMathOperator{\w}{\textnormal{\textbf{w}}}
\DeclareMathOperator{\Pic}{\text{Pic}}
\DeclareMathOperator{\rank}{\textnormal{\text{rank}}}
\DeclareMathOperator{\Aut}{\textnormal{\text{Aut}}}
\DeclareMathOperator{\Iso}{\textnormal{\text{Iso}}}
\DeclareMathOperator{\Div}{\textnormal{\text{div}}}

\newcommand{\Lie}[1]{\mathfrak{#1}}

\begin{document}

\begin{abstract}
    We study the structure of compact Vaisman and l.c.K. manifolds with nonnegative bisectional curvature in this paper. Our first main result (cf. Theorem \ref{thm: main}) is a uniformization theorem for compact Vaisman manifolds, extending Mok's uniformization theorem in the Kähler setting (cf.\cite{Mok88}). Our second result (cf. Theorem \ref{thm: lck manifolds with nonnegative bisectional curvature}) gives a structural trichotomy for compact l.c.K. manifolds with nonnegative Chern bisectional curvature: the underlying complex manifold admits a Kähler metric, admits a Vaisman metric, or its universal cover is conformally equivalent to the product of an incomplete Kähler manifold and a positive-dimensional complex Euclidean space.
\end{abstract}

\maketitle

\section{Introduction}\label{sec: introduction}

\subsection{Background} One of the landmark achievements in K\"{a}hler geometry and algebraic geometry is the resolution of the Frankel conjecture by Siu and Yau:
\begin{theorem}[\cite{SY80}]
    Any compact K\"{a}hler manifold with positive bisectional curvature is biholomorphic to the complex projective space.
\end{theorem}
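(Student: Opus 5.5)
The statement is the Frankel conjecture, proved by Siu and Yau. I would follow Mori's route, which rests on rational curves, rather than the original harmonic-map route of Siu--Yau. The strategy is to show that $M$ is Fano with many rational curves of high anticanonical degree, and then invoke the Kobayashi--Ochiai characterization. That result states that a compact complex manifold $M$ of dimension $n$ is biholomorphic to $\CP^n$ if $c_1(M)=(n+1)h$ for an ample class $h$. Equivalently, by Cho--Miyaoka--Shepherd-Barron, it suffices that $-K_M\cdot C\ge n+1$ for every rational curve $C$.

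\textbf{Step 1: $M$ is projective and Fano.} Positive bisectional curvature gives positive Ricci curvature. By Yau's theorem $c_1(M)>0$, so $-K_M$ is ample and $M$ is projective (Kodaira embedding). By Bonnet--Myers and Kobayashi's theorem, $M$ is also simply connected.

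\textbf{Step 2: the tangent bundle is ample on rational curves.} Let $f:\CP^1\to M$ be a nonconstant holomorphic map, and split $f^*TM=\bigoplus_i \mathcal{O}(a_i)$ by Grothendieck. Each summand $\mathcal{O}(a_i)$ is a quotient of $f^*TM$. Curvature decreases on holomorphic quotients, and the pulled-back metric has positive curvature in the directions $(f_*\partial_z, v)$ by positive bisectional curvature. Integrating the curvature of the induced metric on the quotient line $\mathcal{O}(a_i)$ against $f^*\omega$ then gives $a_i>0$ at generic points, hence $a_i\ge1$ for all $i$. The summand containing $f_*T\CP^1=\mathcal{O}(2)$ forces one $a_i\ge2$. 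Therefore $-K_M\cdot f_*[\CP^1]=\sum a_i\ge n+1$.

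\textbf{Step 3: existence of rational curves and conclusion.} Since $M$ is Fano, Mori's bend-and-break (via reduction mod $p$) produces rational curves through every point. By Step 2, every rational curve satisfies $-K_M\cdot C\ge n+1$. The Cho--Miyaoka--Shepherd-Barron/Kebekus characterization then gives $M\cong\CP^n$. For a more classical finish, one can instead take minimal rational curves through a generic point, which are all lines. Mori's original argument shows that these curves, via the deformation space of $f$ fixing a point, sweep out $M$ with $-K_M\cdot C=n+1$ and generate $H_2$, which yields the ampleness of $-K_M/(n+1)$ needed for Kobayashi--Ochiai.

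The main obstacle is Step 3, namely producing rational curves and upgrading the degree bound to the global identification. This requires Mori's characteristic-$p$ bend-and-break, or alternatively Siu--Yau's stable harmonic map argument: there one shows an energy-minimizing map $S^2\to M$ in a generator of $\pi_2$ is $\pm$holomorphic, using a second-variation formula together with positivity of bisectional curvature. Either way, this step carries the heavy machinery, and I would cite Mori's theorem rather than reprove it.
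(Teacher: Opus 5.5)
\textbf{Verdict:} your proposal is correct, with two small slips noted below. It follows the algebraic (Mori) route, not the Siu--Yau argument the paper cites.

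The paper does not prove this statement. It quotes it as background from \cite{SY80}, whose argument is analytic: energy-minimizing harmonic $2$-spheres are shown to be $\pm$holomorphic by a second-variation argument using positive bisectional curvature, and Kobayashi--Ochiai finishes. The paper's introduction also points to the route you take. Positive bisectional curvature is Griffiths positivity of $TM$, so $TM$ is ample, and Mori's proof of Hartshorne's conjecture \cite{Mor79} applies once $M$ is known to be projective.

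Your proposal carries out this route correctly. Step 1 supplies projectivity and the Fano property. Step 2 proves only the curvewise statement $a_i\ge 1$, together with an $\mathcal{O}(2)$-receiving summand. That is exactly what the Cho--Miyaoka--Shepherd-Barron/Kebekus criterion needs, so you need not even quote ``Griffiths positive implies ample.''

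Your route buys generality. Essentially nothing beyond Griffiths positivity of $TM$ is used, so the same argument gives Hartshorne's conjecture and, as the paper remarks, the Hermitian case with positive Chern bisectional curvature. It costs the characteristic-$p$ bend-and-break and the CMSB characterization. Siu--Yau instead stays within differential geometry, using the K\"ahler metric and its curvature directly.

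Two small corrections:
\begin{enumerate}
\item In Step 2 you have the direction backwards: in the Griffiths sense curvature \emph{increases} on holomorphic quotient bundles and decreases on subbundles. It is this increase that bounds the curvature of the quotient $\mathcal{O}(a_i)$ below by $R(f_*\partial_z,\overline{f_*\partial_z},v,\bar v)\ge 0$, which is positive wherever $df\neq 0$.
\item In Step 1, $c_1(M)>0$ needs no appeal to Yau's theorem, since the Ricci form is itself a positive representative of $2\pi c_1(M)$. Yau's theorem is the converse.
\end{enumerate}
Finally, in your ``classical finish'' the real content is that the minimal rational curves generate $H_2(M,\Z)$. You assert this without argument, but your main line through CMSB does not depend on it.
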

\noindent A closely related conjecture, due to Hartshorne (cf.\cite{Har06}), asserts that any projective manifold with ample tangent bundle is biholomorphic to the projective space. Since ampleness is a weaker condition than the positivity of bisectional curvature, Hartshorne's conjecture is therefore more general. It was proved by Mori around the same time in his groundbreaking paper \cite{Mor79}. 

Following the work of Siu and Yau, a natural problem was to establish an analogous uniformization result for compact K\"{a}hler manifolds with nonnegative bisectional curvature. After several partial results in \cite{HSW81,Wu81,CC86}, this problem was completely settled by Mok:
\begin{theorem}[\cite{Mok88}]
    Any compact K\"{a}hler manifold with nonnegative bisectional curvature is covered by
    \begin{equation}
        (\C^r,g_0)\times(\CP^{n_1},\omega_1)\times\cdots\times(\CP^{n_k},\omega_k)\times(N_1,s_1)\times\cdots\times(N_l,s_l),
    \end{equation}
    in which $g_0$ is the flat Euclidean metric, each $\omega_i,1\leq i\leq k$ is a K\"{a}hler metric with nonnegative bisectional curvature, and each $(N_j,s_j),1\leq j\leq l$ is an irreducible compact Hermitian symmetric space of rank at least $2$.
\end{theorem}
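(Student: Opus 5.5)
The plan is to follow the strategy of Mok and reduce to the irreducible case in three stages.

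\emph{Step 1: de Rham splitting.} Pass to the universal cover $(\tilde M,\tilde g)$, which is complete since $M$ is compact. By the de Rham decomposition theorem, together with the fact that the holonomy-invariant splitting of the tangent bundle is $J$-invariant in the K\"ahler case, we write $\tilde M=\C^r\times M_1\times\cdots\times M_m$. Here $\C^r$ carries the flat metric and each $M_i$ is a simply connected, complete, irreducible K\"ahler manifold. Each factor inherits nonnegative bisectional curvature, because bisectional curvature restricted to tangent vectors of one factor equals that of the product.

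Next, use the Cheeger--Gromoll type splitting argument of Howard--Smyth--Wu and Wu. Nonnegative bisectional curvature gives nonnegative Ricci curvature, so every line splits off isometrically and the lines all lie in the flat factor. Moreover, one can show that each $M_i$ is compact. Ricci is nonnegative and the Ricci form restricted to $M_i$ is positive somewhere, since otherwise $M_i$ would be flat by an averaging and maximum principle argument. Combined with the deck-group action, this forces positive Ricci curvature on $M_i$ quasi-everywhere, so Myers/Bonnet compactness applies. Equivalently, we may invoke the known theorem that a compact K\"ahler manifold with $\mathrm{Ric}\ge0$ has a finite cover splitting as a torus times a simply connected manifold with $c_1>0$-type positivity.

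\emph{Step 2: the irreducible compact factors.} Each $M_i$ is now a compact, simply connected, irreducible K\"ahler manifold with nonnegative bisectional curvature. By Berger's holonomy classification, either $M_i$ is locally symmetric, hence an irreducible compact Hermitian symmetric space, or its holonomy is $U(n_i)$. In the symmetric case, rank $\ge2$ gives an $N_j$, and rank $1$ gives $\CP^{n_i}$ with the Fubini--Study metric.

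\emph{Step 3: holonomy $U(n_i)$.} It remains to show that $M_i$ is biholomorphic to $\CP^{n_i}$. Following Mok, run the K\"ahler--Ricci flow from the given metric. By Bando--Mok, nonnegative bisectional curvature is preserved, and for $t>0$ the Ricci curvature becomes positive and the holonomy stays $U(n)$. Apply Hamilton's strong maximum principle to the evolving bisectional curvature. The null set of bisectional curvature is invariant under parallel transport, so full holonomy forces one of two outcomes. Either the bisectional curvature becomes strictly positive, and Siu--Yau (the theorem stated above) gives $M_i\cong\CP^{n_i}$. Or there is a nontrivial parallel null structure; analysed via the minimal rational tangents (varieties of tangents to minimal rational curves), this forces the metric to be locally symmetric, contradicting holonomy $U(n_i)$.

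I expect this last dichotomy to be the main obstacle. It requires showing that the cone of null bisectional directions is invariant and algebraic, so that it reconstructs a Hermitian symmetric structure; this is the heart of Mok's argument. Finally, descending through the deck group is unnecessary, since the statement only asserts a covering. The factors $\omega_i$ are the original metrics on the $\CP^{n_i}$-factors.
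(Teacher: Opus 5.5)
The paper does not prove this theorem. It cites \cite{Mok88} and notes that the argument (K\"ahler--Ricci flow, a strong maximum principle, Berger's list) was simplified in \cite{Gu09} using the maximum principle of \cite{BS08}. Your outline follows that route, but its decisive step is missing.

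In Step 3 you handle the branch ``nontrivial parallel null structure'' only by appealing to varieties of minimal rational tangents, which are supposed to force local symmetry. You give no argument for this, and none of the needed input is set up: uniruledness, minimal rational curves, and an identification of their tangents with the curvature null cone. It is also unnecessary, because full holonomy rules this branch out by linear algebra.

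Here is the argument. For small $t>0$ one has $\mathrm{Ric}>0$, and the null set $\{(X,Y):R(X,\bar X,Y,\bar Y)=0\}$ is invariant under $\mathrm{Hol}=U(n)$ acting diagonally. Fix $Y\neq 0$. The set $K_Y=\{X:R(X,\bar X,Y,\bar Y)=0\}$ is the kernel of a positive semidefinite Hermitian form. Hence it is a complex subspace invariant under the stabilizer $U(Y^\perp)$, so $K_Y$ is $0$, $\C Y$, $Y^\perp$ or $T^{1,0}_x$.

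If $K_Y=T^{1,0}_x$, then $\mathrm{Ric}(Y,\bar Y)=0$. If $K_Y=\C Y$, the holomorphic sectional curvature $H$ vanishes at $Y$, hence everywhere by transitivity. If $K_Y=Y^\perp$, every orthogonal pair is null by transitivity. For orthonormal $Y,Z$ one has
\[
R(Y+Z,\overline{Y+Z},Y-Z,\overline{Y-Z})=H(Y)+H(Z)-2\,\mathrm{Re}\,R(Y,\bar Z,Y,\bar Z),
\]
and replacing $Z$ by $\sqrt{-1}Z$ keeps the pair orthogonal and flips the sign of the last term. Both left-hand sides vanish, so $H(Y)+H(Z)=0$ and $H\equiv 0$.

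Each case contradicts $\mathrm{Ric}>0$. So the bisectional curvature is positive, and \cite{SY80} gives $\CP^{n_i}$.

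Several secondary points also need repair.
\begin{itemize}
\item The parallel invariance of the null set is what \cite{BS08} supplies. Hamilton's strong maximum principle for quadratic forms does not apply verbatim to this quartic condition.
\item You must justify that $g(t)$ keeps holonomy $U(n)$. If $g(t_k)$ were reducible (resp.\ locally symmetric) along some $t_k\to 0$, a limit of the parallel subbundles (resp.\ of $\nabla R=0$) would make $g(0)$ reducible (resp.\ locally symmetric). Also, $\mathrm{Ric}>0$ excludes $SU$ and $Sp$.
\item In Step 1 the Bonnet--Myers argument fails, since positivity ``quasi-everywhere'' gives no uniform lower bound. Use Cheeger--Gromoll directly: $\widetilde{M}=\R^k\times N$ with $N$ compact, and the Euclidean factor is spanned by parallel fields, hence $J$-invariant.
\item Step 2 omits the holonomies $SU(n_i)$ and $Sp(n_i/2)$. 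These are Ricci-flat, and $\mathrm{Ric}=0$ with nonnegative bisectional curvature forces $R=0$, which is impossible for a compact irreducible factor.
\end{itemize}
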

\noindent This result is now known as Mok's uniformization theorem. The proof relies on maximum principles for K\"{a}hler--Ricci flow and Berger's classification of holonomy groups. It was later substantially simplified in \cite{Gu09}, using a more flexible maximum principle first established in \cite{BS08}.

The next question is then whether or not similar uniformization results hold for compact Hermitian manifolds and Chern bisectional curvature. By Mori's theorem \cite{Mor79}, every compact Hermitian manifold with positive Chern bisectional curvature is still biholomorphic to complex projective space so the analogue of Frankel's conjecture still holds. In contrast, for compact Hermitian manifolds with nonnegative Chern bisectional curvature, there is currently no expected general analogue of Mok's uniformization theorem. The two--dimensional case was solved in \cite[Theorem 1.9]{Yan17}. In higher dimensions, perhaps the best known results in this direction are due to \cite{Ust20}, building upon earlier work on Hermitian curvature flow; see \cite{ST11,Ust21}.

While a general uniformization theorem is out of reach, we restrict our attention to special classes of Hermitian manifolds, Vaisman manifolds and more generally, l.c.K. manifolds. These are special Hermitian manifolds introduced by Vaisman in \cite{Vai76,Vai80,Vai82}; see Definition \ref{def: lck manifolds} and Definition \ref{def: vaisman manifolds}. In the Vaisman case, we are able to establish a uniformization theorem (cf. Theorem \ref{thm: main}). For the l.c.K. case, a similar uniformization theorem is yet to be established. Instead, we prove a structure theorem (cf. Theorem \ref{thm: lck manifolds with nonnegative bisectional curvature}), which yields some interesting applications (cf. Corollary \ref{cor: lck manifolds with pi1=Z} and Corollary \ref{cor: lck metrics on non-diagonal Hopf surfaces}).

\subsection{Main theorems}
The first main theorem is a uniformization theorem for compact Vaisman manifolds with nonnegative bisectional curvature. 

\begin{theorem}\label{thm: main}
    Let $(M,g)$ be a compact Vaisman manifold with nonnegative bisectional curvature. Then its universal cover $\widetilde{M}$ is biholomorphic to a quasi--regular K\"{a}hler cone (without the vertex), whose leaf space has the following form
    \begin{equation}\label{eq: leaf space}
        \CP(\w_1)\times\cdots\times\CP(\w_k)\times N_1\times\cdots\times N_l,
    \end{equation}
    where $\CP(\w_i), 1\leq i\leq k$, are weighted projective spaces and $N_j, 1\leq j\leq l$, are underlying complex manifolds of irreducible compact Hermitian symmetric spaces whose rank is at least $2$.

    Conversely, any simply connected quasi--regular K\"{a}hler cone whose leaf space is as in \eqref{eq: leaf space} is biholomorphic to the universal cover of some compact Vaisman manifold with nonnegative bisectional curvature.
\end{theorem}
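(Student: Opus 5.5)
We will use the structure theory of compact Vaisman manifolds. By the Ornea--Verbitsky structure theorem, the universal cover of a compact Vaisman manifold $(M,g)$ is a Kähler cone $C(S)=S\times\R_{>0}$ over a compact Sasakian manifold $S$. The Kähler metric on the cone is $\tilde\omega = d d^c(r^2)$, conformal to the pullback of $g$. After a small deformation within Vaisman structures, or by Ornea--Verbitsky's result that the complex structure of $\widetilde M$ can be realized by a quasi-regular cone, we may assume the Reeb foliation is quasi-regular. Then the leaf space $X=S/S^1$ is a compact Kähler orbifold. The key observation is that the Chern connection of the Vaisman metric $g=\tilde\omega/r^2$ is closely related to the transverse Kähler geometry. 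Concretely, with respect to the splitting $TM=\mathcal{D}\oplus\langle \xi, J\xi\rangle$ into the transverse distribution and the Lee/anti-Lee directions, the Chern curvature restricted to $\mathcal{D}$ should coincide, up to explicit terms coming from $d\theta$, with the transverse Kähler curvature of the foliation.

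\textbf{Step 1: curvature transfer.} First I will compute the Chern bisectional curvature of $g$ in a frame adapted to this splitting. The aim is to show that nonnegativity of the Chern bisectional curvature implies nonnegative transverse bisectional curvature of the transverse Kähler metric $\omega_T = d\theta^c$ (normalized). Here the Lee vector field $\xi$ is parallel for the Chern connection up to torsion terms, and $\xi - \sqrt{-1}J\xi$ is holomorphic Killing. The Lee form is parallel for the Levi-Civita connection, and the Vaisman condition makes the torsion terms in the transverse directions vanish. The result is that $X$ carries an orbifold Kähler metric with nonnegative bisectional curvature. The quasi-regularity step must be done so as to preserve this curvature sign. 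One option is to deform only the Reeb field inside the torus closure of the flow, which does not change the transverse Kähler structure; this is the first thing I would try. Alternatively, one can work directly with the Riemannian foliation and transverse Ricci flow.

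\textbf{Step 2: orbifold Mok.} Next I need a uniformization theorem for compact Kähler orbifolds with nonnegative bisectional curvature. I would adapt Gu's simplified proof via the Kähler--Ricci flow and the Brendle--Schoen maximum principle, which works on orbifolds since the arguments are local or use compactness. Alternatively, one can run the transverse Kähler--Ricci flow on $S$ (Sasaki--Ricci flow), which is smooth. This yields a local de Rham splitting of the transverse holonomy. The flat factor must be absent: $c_1^B>0$ in the transverse sense, since $[d\theta^c]$ is basic-cohomologically related to $c_1$ of the orbibundle and the cone over a flat factor cannot be compact-quotient Vaisman. So $X$ is covered, in the orbifold sense, by factors with positive-bisectional-curvature orbifold Kähler metrics and irreducible Hermitian symmetric spaces of rank at least $2$. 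Positive-curvature Fano orbifold factors are weighted projective spaces, by the orbifold Frankel theorem (Mori/Siu--Yau in the orbifold setting, or the characterization of orbifolds with positive bisectional curvature). Since $\widetilde M$ is simply connected, the orbifold leaf space is orbifold simply connected. Hence there is no further finite quotient, and $X$ is the product \eqref{eq: leaf space}. This is the main obstacle: the orbifold versions of Mok's and Siu--Yau's theorems, including ruling out a finite orbifold quotient, need care.

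\textbf{Converse.} Given such a cone, take the product of the standard Fubini--Study-type orbifold metrics (which have nonnegative bisectional curvature) and symmetric metrics on $X$. Choose the orbifold line bundle, namely the $\Z$-combination of the $\mathcal O(1)$'s matching the cone, so that $\omega_X$ represents its Chern class. Then build the Sasakian metric on the associated circle bundle, form the cone, and quotient by $r\mapsto 2r$ to obtain a compact Vaisman manifold. By the Step 1 formula read backwards, its Chern bisectional curvature is nonnegative.
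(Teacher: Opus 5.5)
Your outline has the same architecture as the paper's: the universal cover as a K\"{a}hler cone with $g^V=dr^2/r^2+g_X$, transfer of the curvature condition to the transverse structure, a Sasakian Mok theorem, and an explicit converse. But Step 1 omits the computation everything hinges on, and what you put in its place fails.

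\textbf{The missing curvature identity.} In cone-adapted coordinates $(w,z^1,\dots,z^{n-1})$ one finds $R^V_{i\bar jk\bar l}=R^T_{i\bar jk\bar l}-2g^T_{i\bar l}g^T_{k\bar j}$ and $R^V_{i\bar jw\bar w}=g^T_{i\bar j}$, with all other components vanishing. So the torsion contribution does not drop out, contrary to your expectation, and nonnegative Chern bisectional curvature is \emph{equivalent} to $R^T(\xi,\bar\xi,\zeta,\bar\zeta)\ge 2|g^T(\xi,\bar\zeta)|^2$. Keeping only ``nonnegative transverse bisectional curvature'' loses exactly what is needed, in three places.
\begin{enumerate}
\item[(i)] It cannot exclude the Euclidean factor, and your substitute argument is false. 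For a compact Sasakian Heisenberg nilmanifold $S$, $C(S)/\langle r\mapsto 2r\rangle$ is a compact Vaisman manifold (a primary Kodaira surface in dimension $2$). Its universal cover is the cone over the Heisenberg group, with leaf space $\mathbb{C}^{n-1}$ and flat transverse metric. Positivity of the transverse K\"{a}hler class, which is the class of the orbibundle, says nothing about $c_1^B$. The paper removes the flat factor because the inequality at $\xi=\zeta$ gives transverse holomorphic sectional curvature $\ge 2$.
\item[(ii)] The same example shows that the Ornea--Verbitsky theorem gives a compact Sasakian link only for a K\"{a}hler $\mathbb{Z}$-cover of $M$, not for $\widetilde M$. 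Yet you use compactness and simple connectivity of $S$ at the same time. The paper proves compactness of the (complete) link of $\widetilde M$ by a Myers-type estimate: $\mathrm{diam}\le\pi$ for Sasakian manifolds with transverse holomorphic sectional curvature $\ge 4$, applied after a $\mathcal D$-homothety and again fed by the bound $2$.
\item[(iii)] In the converse, nonnegative transverse bisectional curvature of the product metric does not give nonnegative Chern curvature. You need $R^T\ge c\,|g^T(\xi,\bar\zeta)|^2$ with $c>0$ on every factor. On $\mathbb{CP}(\mathbf w)$ the paper uses the orbifold K\"{a}hler--Ricci soliton, which has positive bisectional curvature by He--Sun; your ``Fubini--Study-type'' metric would need this checked. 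On rank $\ge 2$ symmetric spaces, whose bisectional curvature genuinely vanishes on some pairs, the paper proves the inequality using strongly orthogonal roots. Then a $\mathcal D$-homothety $r\mapsto r^a$ replaces $g^T$ by $ag^T$, scaling $R^T$ by $a$ but $|g^T(\xi,\bar\zeta)|^2$ by $a^2$, and this is what produces the constant $2$.
\end{enumerate}

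\textbf{The quasi-regular reduction.} This also fails as stated. Moving the Reeb field inside the torus closure is a type I deformation. It changes the contact form and the Reeb foliation, hence the transverse metric $\frac12 d\eta$, so nothing guarantees the curvature condition survives.

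No deformation is needed. Once the link $X$ of $\widetilde M$ is known to be compact and simply connected with nonnegative transverse bisectional curvature, He--Sun's theorem applies directly.
\begin{enumerate}
\item[$\bullet$] In the irregular case the universal cover of the link is a weighted Sasaki sphere. Hence $\widetilde M\cong\mathbb{C}^n\setminus\{0\}$, which carries the regular cone structure with leaf space $\mathbb{CP}^{n-1}$; the statement only asks for a biholomorphism.
\item[$\bullet$] In the quasi-regular case, the homotopy sequence of the orbifold circle bundle $X\to X/\mathcal F_\xi$ gives $\pi_1^{orb}(X/\mathcal F_\xi)=1$. He--Sun's product decomposition, with the flat factor removed as in (i), is then precisely the ``orbifold Mok plus orbifold Frankel'' result you were planning to reprove.
\end{enumerate}
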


In dimension $2$, the above theorem yields the following corollary:

\begin{corollary}
    A compact Vaisman surface with nonnegative bisectional curvature is a Hopf surface, i.e. covered by $\C^2\setminus\{(0,0)\}$.
\end{corollary}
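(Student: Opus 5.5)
We deduce the corollary directly from Theorem \ref{thm: main} by specializing to complex dimension $n=2$. Let $(M,g)$ be a compact Vaisman surface with nonnegative bisectional curvature. By Theorem \ref{thm: main}, the universal cover $\widetilde{M}$ is biholomorphic to a quasi--regular K\"{a}hler cone minus its vertex, whose leaf space $X$ has the form \eqref{eq: leaf space}. This leaf space is a compact complex orbifold of dimension $n-1=1$. Factors $N_j$ coming from irreducible Hermitian symmetric spaces of rank at least $2$ have complex dimension at least $2$, so none can occur, and $l=0$. Hence $X$ is a single one--dimensional weighted projective space $\CP(w_1,w_2)$, i.e. $k=1$.

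The next step is to identify the cone. A quasi--regular K\"{a}hler cone over $X$ is, up to biholomorphism, the total space of a negative orbi--line bundle $L\to X$ with the zero section removed. Equivalently, it is the cone over the associated Sasakian link, a Seifert $S^1$--orbibundle over $X$. For $X=\CP(w_1,w_2)$, simple connectivity of $\widetilde{M}$ forces the link to be simply connected. Since the link is a compact $3$--manifold, it is then $S^3$ by Poincar\'{e}; more directly, simply connected Seifert fibrations over $\CP(w_1,w_2)$ are the weighted Hopf fibrations $S^3\to\CP(w_1,w_2)$. Concretely, I will write down the weighted $\C^*$--action
\[
t\cdot(z_1,z_2)=(t^{w_1}z_1,t^{w_2}z_2)
\]
on $\C^2\setminus\{(0,0)\}$. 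Its quotient is $\CP(w_1,w_2)$, and the corresponding cone is $\C^2\setminus\{(0,0)\}$ with the standard complex structure. The key point is that the holomorphic type of the cone is determined by the leaf space together with the class of $L$, and that simple connectivity singles out the primitive generator. A non-primitive power $L^m$ would give the quotient $(\C^2\setminus\{0\})/\Z_m$, which is not simply connected.

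The main obstacle is justifying that the cone over $\CP(w_1,w_2)$ obtained from Theorem \ref{thm: main} is actually biholomorphic to the weighted model $\C^2\setminus\{(0,0)\}$. I will first try the classification of orbi--line bundles on $\CP(w_1,w_2)$: the orbifold Picard group is $\Z$, generated by $\mathcal{O}(1)$ (after reducing the weights to be coprime). Simple connectivity then forces $L=\mathcal{O}(-1)$, whose punctured total space is $\C^2\setminus\{(0,0)\}$. Alternatively, I could fill in the vertex to obtain a normal Stein surface with a good $\C^*$--action that is smooth and simply connected away from the vertex with link $S^3$, and invoke Mumford's theorem that such a point is smooth, giving $\C^2$. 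Either way, $M=\widetilde{M}/\pi_1(M)$ is covered by $\C^2\setminus\{(0,0)\}$, so $M$ is a Hopf surface.
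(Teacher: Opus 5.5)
Your proposal is correct and takes the route the paper intends: the paper just specializes Theorem \ref{thm: main} to $n=2$, where dimension rules out the rank $\geq 2$ factors and leaves a single $\CP(w_1,w_2)$ as the leaf space. Your extra step identifying the simply connected cone over $\CP(w_1,w_2)$ with $\C^2\setminus\{(0,0)\}$ is valid and fills in a detail the paper leaves implicit; you do this either via Theorem \ref{thm: quasi-regular kahler cones}, $\Pic^{orb}(\CP(w_1,w_2))\cong\Z$ and the fact that $\mathcal{O}(-m)$ gives $(\C^2\setminus\{0\})/\Z_m$, or via Mumford's theorem.
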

\begin{remark}
    In fact, there is a much stronger result. By combining \cite[Proposition 1.10]{Yan17} and \cite[Theorem 1]{Bel00}, one deduces that a compact complex surface admits a Vaisman metric with nonnegative bisectional curvature only if it is a Hopf surface of class 1, i.e., a quotient of some diagonal Hopf surface.
\end{remark}

The second main theorem is a structure theorem for compact l.c.K. manifolds with nonnegative bisectional curvature. 
\begin{theorem}\label{thm: lck manifolds with nonnegative bisectional curvature}
    If $(M,g)$ is a compact l.c.K. manifold with nonnegative bisectional curvature, then at least one of the following holds:
    \begin{enumerate}
        \item $M$ admits a K\"{a}hler metric;
        \item $M$ admits a Vaisman metric;
        \item The universal cover $(M,\widetilde{g})$ is conformally equivalent to $(L,g_L)\times(\C^k, g_0)$, where $(L,g_L)$ is an incomplete K\"{a}hler manifold and $(\C^k, g_0), k>0$, is a complex Euclidean space.
    \end{enumerate}
\end{theorem}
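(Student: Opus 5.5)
We prove the trichotomy by lifting to the universal cover and applying the de Rham decomposition to the Kähler metric there. Let $\theta$ be the Lee form of $(M,g)$, so $d\omega=\theta\wedge\omega$ with $d\theta=0$. If $[\theta]=0$ in $H^1(M,\R)$, then $g$ is globally conformal to a Kähler metric and alternative (1) holds. So assume $[\theta]\neq 0$. On $\widetilde M$ write $\widetilde\theta=d\varphi$; then $g_K=e^{-\varphi}\widetilde g$ is Kähler, and $\pi_1(M)$ acts on $(\widetilde M,g_K)$ by holomorphic homotheties with character $\chi:\pi_1(M)\to\R_{>0}$. Since $[\theta]\neq 0$, some deck transformation $\gamma$ is a strict homothety, $\chi(\gamma)\neq 1$.

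The first step is to translate the curvature hypothesis into a statement about $g_K$. The Chern connection of $\widetilde g$ and the Levi-Civita connection of $g_K$ differ by a term built from $\widetilde\theta$. I would compute the Chern bisectional curvature of the conformal metric $\widetilde g=e^{\varphi}g_K$ and show that its $\partial\bar\partial\varphi$-part, paired on $X,\bar X,Y,\bar Y$, gives
\begin{equation*}
R^{Ch}_{\widetilde g}(X,\bar X,Y,\bar Y)=e^{\varphi}\Bigl(R_{g_K}(X,\bar X,Y,\bar Y)-\partial\bar\partial\varphi(X,\bar X)\,|Y|^2_{g_K}\Bigr).
\end{equation*}
Then I would average the nonnegativity of the left side over the unit sphere in $Y$, or restrict to $Y$ in a suitable direction, to extract a usable sign on $R_{g_K}$. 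The more useful output is structural. Completeness of $g_K$ fails precisely because of the homothety $\gamma$: its metric completion adds at most a vertex or boundary.

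The second step is the de Rham decomposition. Since $\pi_1(M)$ acts by homotheties, it preserves the holonomy decomposition of $g_K$, in particular its flat factor. Without global completeness de Rham's theorem does not apply directly, so I would use a local decomposition that is invariant under homotheties, in the style of Gini--Ornea--Parton--Piccinni or of Kourganoff's classification of similarity structures. Kourganoff's result for compact manifolds with a closed nonexact similarity structure gives a dichotomy. In the first case the metric $g_K$ is flat, or more generally the holonomy is irreducible and acts transitively on the unit sphere, and then $g_K$ is a Riemannian cone, so the structure is Vaisman or Kähler. In the second case $\widetilde M$ is isometric to $(L,g_L)\times(\R^q,g_0)$ with $q>0$ and $L$ incomplete.

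The third step adds the complex structure. The flat factor is $J$-invariant because the holonomy decomposition of a Kähler metric is, so $\R^q=\C^k$ with $k>0$ and $L$ is Kähler. This gives alternative (3). In the cone case, a Kähler cone over a Sasakian manifold on which the deck group acts cocompactly by homotheties carries a Vaisman metric: take $g_K/r^2$, which is Vaisman and descends to $M$. This gives (2). I have to check that flat $g_K$ falls into either (2) or (3): flat $\C^n\setminus\{0\}$ is a cone, giving (2); otherwise a Euclidean factor splits off.

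I expect the main obstacle to be the second step, namely justifying the decomposition for the incomplete Kähler metric $g_K$ and ruling out a third possibility, reducible holonomy with no flat factor. That ruling out is where the nonnegative Chern bisectional curvature enters. I would first try Kourganoff's theorem as a black box, which already excludes that case regardless of curvature. If that is not admissible, I would use nonnegative bisectional curvature together with the homothety $\gamma$ as follows. Iterating $\gamma$ rescales the curvature by $\chi(\gamma)^{\pm 1}$, and under nonnegativity this forces the non-flat factors to be cones with vanishing radial curvature. This step is the one I am least sure of, in particular because the nonnegativity information available on $R_{g_K}$ from the first step may be weaker than needed.
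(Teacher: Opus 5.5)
There is a genuine gap. It is in your second step, at the claim that in Kourganoff's irreducible case $g_K$ is a Riemannian cone. Irreducible holonomy gives no vector field $E$ with $\nabla^{g_K}E=\mathrm{Id}$. This holds even for the full $U(n)$, which is transitive on the sphere and is the generic Kähler holonomy. Gallot's theorem (a cone over a complete manifold is irreducible or flat) goes the other way.

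Your argument never uses the curvature hypothesis effectively:
\begin{itemize}
\item Your first step only yields $R_{g_K}(X,\bar X,Y,\bar Y)\ge \partial\bar\partial\varphi(X,\bar X)\,|Y|^2_{g_K}$. The right-hand side has no sign, so the ``iterate $\gamma$'' fallback has no nonnegativity to exploit.
\item Kourganoff's trichotomy and your third step are curvature-free.
\end{itemize}
So your argument would prove the trichotomy for every compact l.c.K. manifold, which is false. A non-diagonal Hopf surface carries l.c.K. metrics, admits no Kähler metric ($b_1=1$), and admits no Vaisman metric \cite{GO98,Bel00}. Its universal cover $\C^2\setminus\{0\}$ has $H_3\cong\Z$. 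It therefore cannot be diffeomorphic to $L\times\C^k$ with $k>0$, since $L$ would have real dimension at most $2$. Such a structure is not of type (iii), and it is not flat, since by Fried a flat one is a cone. Hence it is irreducible in Kourganoff's sense. Yet $g_K$ is not a cone, since otherwise $g_K/r^2$ would descend to a Vaisman metric on $M$.

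You have also misplaced where curvature is needed. Kourganoff already excludes reducible holonomy without a Euclidean factor. The hypothesis is needed to upgrade ``irreducible'' to ``cone''.

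The missing ingredient is a source of such an $E$. The paper gets it from Hermitian curvature flow: by \cite[Corollary 5.4]{Ust20}, if $M$ is non-Kähler (hence non-Fano), then $\widetilde M$ carries an almost free, holomorphic, $\widetilde g$-isometric action of a nontrivial simply connected complex Lie group $G$. Since $\widetilde g$ is conformal to $g_K$, $G$ acts by $g_K$-homotheties with a character $\phi:G\to\R_+$.
\begin{itemize}
\item If $\phi\neq1$, there are $\xi$ and $E=-J\xi$ in $\mathfrak{g}$ with $\mathcal{L}_\xi g_K=0$ and $\mathcal{L}_E g_K=2g_K$. This forces $\nabla E=\mathrm{Id}$, so $g_K$ is a Kähler cone. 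Kourganoff enters only afterwards, in Theorem \ref{thm: complete Kahler cone}, to show the link is complete; then deck transformations rescale $r$ and $g_K/r^2$ descends.
\item If $\phi=1$, then $G\cong\C^k$ acts by holomorphic $g_K$-isometries, and the orthogonal distribution to the orbits is integrable. Then $\widetilde M\cong L\times\C^k$ with $g_K=g_L+g_0$. The metric $g_L$ is incomplete, because a contracting deck transformation of a complete space has a fixed point.
\end{itemize}

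Your handling of Kourganoff's type (iii) is fine and can be kept. To repair the irreducible case, feed in Ustinovskiy's action. There $\phi=1$ is impossible: if a real holomorphic $\xi$ and $J\xi$ are both $g_K$-Killing, then $A=\nabla\xi$ is skew and commutes with $J$. So $JA=\nabla(J\xi)$ is both skew and symmetric, which forces $\xi$ to be parallel and contradicts irreducibility. Hence $\phi\neq1$ and the cone structure follows.
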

\begin{remark}
    Case (3) in the theorem can be ruled out by requiring $\pi_1(M)=\Z$; see Corollary \ref{cor: lck manifolds with pi1=Z}. 
\end{remark}

\subsection{Outline of the paper} 
\begin{enumerate}
    \item Section \ref{sec: preliminaries} reviews basic notions and results from Sasaki geometry and the theory of compact Vaisman manifolds, which will be needed for the proofs of Theorem \ref{thm: main} and Theorem \ref{thm: lck manifolds with nonnegative bisectional curvature}.
    
    \item In Section \ref{sec: structure of the universal cover}, we prove the first part of Theorem \ref{thm: main}; the second part is proved in Section \ref{sec: vaisman metrics}.
    
    The idea for the first part is as follows: first, it is well known that compact Vaisman manifolds are covered by K\"{a}hler cones (cf. Theorem \ref{thm: structure theorem}). The nonnegativity of bisectional curvature is then equivalent to a certain curvature condition on the K\"{a}hler cone (cf. Corollary \ref{cor: reduction to kahler cone}).We then apply the Sasaki analogue of Mok's theorem, proved by He and Sun (cf. Theorem \ref{thm: He-Sun}), to obtain the desired characterization of the universal cover. 

    Section \ref{sec: vaisman metrics} provides an explicit construction of Vaisman metrics with nonnegative bisectional curvature on all K\"{a}hler cones in Theorem \ref{thm: main}. 
    
    \item Section \ref{sec: lck manifolds} deals with the l.c.K. case. In the Subsection \ref{subsec: proof}, we prove Theorem \ref{thm: lck manifolds with nonnegative bisectional curvature}. The key ingredient is a theorem in \cite{Ust20}, which shows that if $M$ is non--K\"{a}hler, the universal cover of $(M,g)$ must possess a nontrivial complex Lie group action. A detailed study of this group action yields (2) and (3) in Theorem \ref{thm: lck manifolds with nonnegative bisectional curvature}. 
    
    Two corollaries (Corollary \ref{cor: lck manifolds with pi1=Z} and Corollary \ref{cor: lck metrics on non-diagonal Hopf surfaces}) are given in Subsection \ref{subsec:applications}. We shall also discuss possible improvements and questions in Subsection \ref{subsec: questions}.
\end{enumerate}

\section{Preliminaries}\label{sec: preliminaries}

\subsection{Sasaki geometry and K\"{a}hler cones}

This subsection reviews some of the fundamental concepts and results regarding compact Sasaki manifolds and their associated K\"{a}hler cones. For a comprehensive introduction to these topics, readers are referred to the monograph by Boyer and Galicki (cf. \cite{BG07}).

Let $(X,g_X)$ be a Riemannian manifold. The cone over it is $C(X)=\R_+\times X$ equipped with the cone metric $dr^2+r^2g_X$. Throughout this paper, all cones are understood to have their vertices removed.

A \textit{Sasaki structure} on $(X,g_X)$ is an integrable complex structure $J$ on $C(X)$ such that 
\begin{enumerate}
    \item $(C(X),J,dr^2+r^2g_X)$ is K\"{a}hler.
    \item $\mathcal{L}_{r\frac{\partial}{\partial r}}J=0$, equivalently, $r\frac{\partial}{\partial r}$ is a real holomorphic vector field.
\end{enumerate}
Under these conditions, $(X,g_X,J)$ is called a \textit{Sasaki manifold}, $(C(X),J,dr^2+r^2g_X)$ is referred to as the associated \textit{K\"{a}hler cone}.

For a given K\"{a}hler cone $(C(X),J,dr^2+r^2g_X)$, the vector field $\xi=Jr\frac{\partial}{\partial r}$ is tangent to the Sasaki link $X\cong \{1\}\times X$ and it is known as the \textit{Reeb vector field}. The Reeb vector field is a Killing vector field of constant unit length. The associated $1$-form $\eta(\cdot)=g_X(\xi,\cdot)$ is called the \textit{contact form}. Let $\mathcal{D}$ denote the distribution defined by the orthogonal complement to $\xi$ on $X$ and $g^T$ be the restriction of the metric $g_X$ on $\mathcal{D}$. The metric $g_X$ can thus be decomposed as
\begin{equation}
    g_X=\eta\otimes\eta+g^T.
\end{equation}
Furthermore, the K\"{a}hler form $\omega$ of the cone metric $dr^2+r^2g_X$ is given by
\begin{equation}
    \omega=rdr\wedge\eta+r^2\omega^T=\frac{\sqrt{-1}}{2}\partial\Bar{\partial}r^2,
\end{equation}
where $\omega^T(\cdot,\cdot)=g^T(J\cdot,\cdot)$ represents the \textit{transverse K\"{a}hler form}. This formula demonstrates that the K\"{a}hler cone metric is entirely determined by its radial function $r$. 

The Reeb field generates a one-parameter subgroup of Sasaki automorphisms on $X$, inducing a 1-dimensional foliation $\mathcal{F}_\xi$ on the manifold. K\"{a}hler cones are classified into two primary types based on the closedness of leaves in $\mathcal{F}_\xi$:
\begin{enumerate}
    \item \textit{irregular}: there is a non-closed leaf.
    \item \textit{quasi-regular}: all leaves are closed. Equivalently, $\xi$ generates an $S^1$ action on $X$ via Sasaki automorphisms. In this case, the quotient of $X$ by the Reeb flow, $X/\mathcal{F}_\xi$, is a complex orbifold, and the transverse K\"{a}hler form descends to an orbifold K\"{a}hler form on it. This K\"{a}hler orbifold is called the \emph{leaf space} of the K\"{a}hler cone.
    
    If $\xi$ induces a free $S^1$ action, the K\"{a}hler cone is called \textit{regular}. In this case, the leaf space becomes a K\"{a}hler manifold.
\end{enumerate}

Two fundamental deformations can be applied to a given K\"{a}hler cone:
\begin{enumerate}
    \item A \textit{type I deformation} preserves the complex structure $J$ and varies the Reeb vector field $\xi$ and the metric.
    \item A \textit{type II deformation} preserves both $J$ and $\xi$ but deforms the metric.
\end{enumerate}
Further details on these deformations can be found in the literature (cf. \cite{MSY08,FOW09}).
This paper exclusively utilizes a specific class of type I deformation known as a \textit{$\mathcal{D}$-homothety}: Specifically, if $\omega=\frac{\sqrt{-1}}{2}\partial\Bar{\partial}r^2$ is a K\"{a}hler cone, a $\mathcal{D}$-homothety produces new K\"{a}hler cone structure given by $\omega_a=\frac{\sqrt{-1}}{2}\partial\Bar{\partial}r^{2a}, a>0$.

We give two classes of examples here, which are important for our later arguments.

\begin{example}[Weighted Sasaki spheres]
    Consider the standard Euclidean space $((\C^n)^\times, g_0)$. Following \cite{HS15,HS16}, a \textit{simple deformation} is a K\"{a}hler cone structure obtained from $g_0$ by a type I deformation followed by a type II deformation. For a simply deformed K\"{a}hler cone from $g_0$, the associated Sasaki link is called a \textit{weighted Sasaki sphere}, which is a sphere $S^{2n-1}$ with the deformed Sasaki structure.
\end{example}

\begin{example}[Negative line bundles over projective manifolds]\label{exm: quasi regular cones}
    Let $L\to N$ be a positive line bundle over a projective manifold $N$ and $(L^{-1})^\times$ be the complement of the zero section within $L^{-1}$. Choose a Hermitian metric $h$ on $L^{-1}$. It induces a radial function $r_h(\zeta)=h(\zeta,\Bar{\zeta})^{1/2}$ on $(L^{-1})^\times$. Then $\omega_h=\frac{\sqrt{-1}}{2}\partial\Bar{\partial}r^2_h$ defines a K\"{a}hler cone structure on $(L^{-1})^\times$. The K\"{a}hler cone is regular because the Reeb flow is the free $S^1$ action on the fibers of $L^{-1}$.
    
    The transverse K\"{a}hler form $\omega_h^T$ of $((L^{-1})^\times,\omega_h)$ descends to $N$ and it is equal to $\frac{\sqrt{-1}}{2}\partial\Bar{\partial}\log h=-\frac{\sqrt{-1}}{2}R_h$, where $R_h$ is the curvature form of $h$. This in particular implies that $[\omega_h^T]=\pi c_1(L)$.

    This construction also works for a positive orbifold line bundle $L$ over a projective orbifold $N$. In this broader context, the construction yields a quasi-regular K\"{a}hler cone. Furthermore, the transverse K\"{a}hler form $\omega_h^T$ is an orbifold K\"{a}hler form and $[\omega^T_h] = \pi c_1^{orb}(L)\in H^2_{orb}(X,\Z)$. We will mainly deal with the case when $N$ is a product of weighted projective spaces in this note.
\end{example}

We shall need the following structure theorem of quasi-regular K\"{a}hler cones, which asserts that essentially all such K\"{a}hler cones arise from the construction in the above example. 

\begin{theorem}[\text{\cite[Theorem 1.5]{Spa10}}]\label{thm: quasi-regular kahler cones}
    Let $(C(X),J,g_C)$ be a quasi-regular K\"{a}hler cone. There exists a positive orbifold line bundle $L\to X/\mathcal{F}_\xi$ and an orbifold Hermitian metric $h$ on $L^{-1}$ such that $(C(X),J,g_C)$ is $\mathcal{D}$ homothetic to $((L^{-1})^\times,\omega_h)$. Equivalently, it is isomorphic to 
    \begin{equation}
        \left((L^{-1})^\times,\frac{\sqrt{-1}}{2}\partial\Bar{\partial}r_h^{2a}\right), \quad a>0.
    \end{equation}
\end{theorem}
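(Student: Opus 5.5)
The plan is to reconstruct the line bundle directly from the $S^1$-action generated by the Reeb field, and then compare radial functions. Set $Z=X/\mathcal{F}_\xi$. The holomorphic vector field $r\partial_r - \sqrt{-1}\xi$ generates a holomorphic $\C^*$-action on $C(X)$: $\xi$ generates a compact $S^1$-action, $r\partial_r$ generates the dilations $\R_+$, and the two commute because $\xi=Jr\partial_r$ with $r\partial_r$ real holomorphic. The action is locally free, and it is free away from the orbifold strata. The quotient $C(X)/\C^*$ equals $X/S^1=Z$, since every $\R_+$-orbit meets $X=\{r=1\}$ exactly once. This exhibits $C(X)\to Z$ as a holomorphic orbifold principal $\C^*$-bundle. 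Next I will form the associated orbifold line bundle $E=C(X)\times_{\C^*}\C$, glued along the zero section, so that $C(X)\cong E^\times$ as complex orbifolds. The identification is equivariant for the standard fiberwise $\C^*$-action on $E$.

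Then I will identify the metric. On $E^\times$, define $h(\zeta,\bar\zeta)=r(\zeta)^2$. This is well defined as a Hermitian metric on $E$, because $r$ is $S^1$-invariant and homogeneous of degree one under $\R_+$. One point needs care: with the weights chosen above, $r$ scales as $|\lambda|^a$ for some $a>0$ depending on the normalization of the period of $\xi$, not necessarily as $|\lambda|$. So I will take $h$ to be the fiber metric for which $r=r_h^{a}$, namely $r_h=r^{1/a}$. This $r_h$ is homogeneous of degree $1$ in the fiber coordinate, hence $r_h^2$ is a genuine Hermitian norm squared. Since $\omega=\frac{\sqrt{-1}}2\partial\bar\partial r^2$, we get $\omega=\frac{\sqrt{-1}}2\partial\bar\partial r_h^{2a}$, which is the claimed $\mathcal D$-homothety.

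The remaining step is positivity. Set $L=E^{-1}$, so that $E=L^{-1}$. By Example \ref{exm: quasi-regular cones}, the transverse form of $\frac{\sqrt{-1}}{2}\partial\bar\partial r_h^2$ descends to $Z$ as $\frac{\sqrt{-1}}2\partial\bar\partial\log h$, which is the curvature of $L$ up to sign. I then need to show that $\frac{\sqrt{-1}}{2}\partial\bar\partial r_h^{2}$ is Kähler whenever $\frac{\sqrt{-1}}{2}\partial\bar\partial r_h^{2a}$ is. To do this I will use the standard computation $\frac{\sqrt{-1}}{2}\partial\bar\partial e^{t}=e^{t}\bigl(\frac{\sqrt{-1}}{2}\partial\bar\partial t+\frac{\sqrt{-1}}{2}\partial t\wedge\bar\partial t\bigr)$ with $t=a\log r_h^2$. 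It shows that positivity of the cone form forces $\frac{\sqrt{-1}}2\partial\bar\partial\log r_h^2>0$ on the horizontal directions. That is exactly the statement that $L$ has positive curvature, and hence is positive on $Z$.

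I expect the main obstacle to be the orbifold bookkeeping. At points with nontrivial isotropy, the $\C^*$-action is only locally free, and I have to check that $E$ really is an orbifold line bundle whose local uniformizing charts are compatible with those of $Z$. I will handle this with slice charts: near an orbit with isotropy $\Z_m$, a holomorphic slice $U$ transverse to the orbit gives a chart $U\times\C^*/\Z_m$ of the cone. This simultaneously provides the orbifold chart $U/\Z_m$ on $Z$ and the local trivialization $U\times\C/\Z_m$ of $E$.
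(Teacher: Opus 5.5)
Your argument is correct. The paper does not actually prove this statement: it quotes \cite[Theorem 1.5]{Spa10}, where the Reeb field is normalized to generate a $U(1)$-action of period $2\pi$. It then remarks that dropping this normalization is exactly what produces the $\mathcal{D}$-homothety.

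What you have written is a self-contained version of the standard orbibundle construction behind that citation. Your exponent $a$ is precisely the normalization in the paper's remark. If $T$ is the period of the effective Reeb circle action, your parametrization of $\C^*$ gives $a=T/2\pi$. The Reeb field of $\omega_h$ is $J r_h\partial_{r_h}=a\xi$, which has period $2\pi$, so $a=1$ recovers the normalized statement in \cite{Spa10}.

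Your route buys two things over the bare citation. First, $a$ becomes explicit. Second, you actually check the sign of the curvature of $h$. Example \ref{exm: quasi regular cones} glosses over this: it asserts that $\omega_h$ is K\"{a}hler for an arbitrary Hermitian metric $h$ on $L^{-1}$, which in fact requires $\sqrt{-1}\partial\bar\partial\log h>0$.

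Two small points to tighten:
\begin{enumerate}
\item Fix $T$ so that the circle action is effective. Then $X/\mathcal{F}_\xi$ receives its natural effective orbifold structure, and the slice representations in your charts are faithful.
\item Close the loop on positivity explicitly. One option is to use $\omega^T=\frac{\sqrt{-1}}{2}\partial\bar\partial\log r^2=a\cdot\frac{\sqrt{-1}}{2}\partial\bar\partial\log h$ directly. The other is to apply your identity again with $t=\log r_h^2$. The pulled-back form $\frac{\sqrt{-1}}{2}\partial\bar\partial\log h$ is semipositive, and its kernel is exactly the vertical direction $w\partial_w$. On that direction $\frac{\sqrt{-1}}{2}\partial t\wedge\bar\partial t$ is positive, since $\partial t(w\partial_w)=1$. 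Hence $\omega_h$ is positive definite, as the statement requires.
\end{enumerate}
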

\begin{remark}
    There is a subtlety in this theorem. In Theorem 1.5 of \cite{Spa10}, the author normalizes the K\"{a}hler cone by assuming that $\xi$ generates a $U(1)$ action. The above formulation doesn't impose this normalization. This is the reason why the $\mathcal{D}$ homothety shows up in the above statement.
\end{remark}

Sasaki geometry is frequently analyzed through the associated transverse K\"{a}hler geometry, which mirrors compact K\"{a}hler geometry. Consequently, there are natural analogues of the known results about compact K\"{a}hler manifolds for K\"{a}hler cones with compact Sasaki links. Among them is the following generalization of Mok's uniformization theorem (cf. \cite{Mok88}) shown by He and Sun (cf. \cite{HS15}):

\begin{theorem}\label{thm: He-Sun}
    If a compact Sasaki manifold $(X,g_X,J)$ has nonnegative transverse bisectional curvature, then
    \begin{enumerate}
        \item If the Reeb flow is irregular, then the universal cover of $(X,g_X,J)$ is isomorphic to a weighted Sasaki sphere. 
        \item If the Reeb flow is quasi-regular, then the universal cover of the leaf space $(X/\mathcal{F}_\xi, \omega^T)$ is isomorphic to 
        \begin{equation}\label{eq: quasiregular case}
            (\C^r,g_0)\times\prod_{i=1}^k(\CP(\w_i),\omega_i)\times\prod_{j=1}^l(N_j,s_j),
        \end{equation}
        where each $\omega_i, 1\leq i\leq k$ is a K\"{a}hler metric on $\CP(\w_i)$ with nonnegative bisectional curvature and each $(N_j,s_j), 1\leq j\leq l$ is an irreducible compact Hermitian symmetric space and $\rank N_j\ge 2$.
    \end{enumerate}
\end{theorem}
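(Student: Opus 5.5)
The plan is to transplant Mok's argument, in the form simplified by Gu using the Brendle--Schoen maximum principle, to the transverse K\"{a}hler geometry of $(X,g_X,J)$, and to pass between the irregular and quasi-regular cases by type I deformations. First, I would run the Sasaki--Ricci flow $\partial_t\omega^T=-\rho^T$, which is the transverse analogue of K\"{a}hler--Ricci flow; it has short-time existence for basic data and preserves the Reeb field and the transverse complex structure. Because all evolving quantities are basic, the maximum principles used for Mok's theorem apply verbatim. They can be run either on $X$ with basic tensors, or on the orbifold leaf space in the quasi-regular case. The weak maximum principle shows that nonnegative transverse bisectional curvature is preserved. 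The Brendle--Schoen strong maximum principle shows that for $t>0$ the null set $\{(u,v): R^T(u,\bar u,v,\bar v)=0\}$ is invariant under transverse parallel transport.

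Next, with the metric at small $t>0$, I would decompose the transverse holonomy. In the quasi-regular case the leaf space $Z=X/\mathcal{F}_\xi$ is a compact K\"{a}hler orbifold with nonnegative bisectional curvature. Its orbifold universal cover splits by the de Rham decomposition into a flat factor $\C^r$ and irreducible factors. For an irreducible factor, Berger's classification gives two possibilities. If the holonomy is $U(n_i)$, holonomy invariance of the null set forces the bisectional curvature to be strictly positive. Otherwise the factor is locally symmetric, hence a compact Hermitian symmetric space of rank $\ge 2$. Nonnegativity of the Ricci curvature with compactness ensures that the non-flat factors are compact.

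The step I expect to be the main obstacle is identifying a compact K\"{a}hler orbifold with positive bisectional curvature as a weighted projective space $\CP(\w_i)$. For this I would pass back to the Sasaki side. The factor is the leaf space of a quasi-regular Sasaki manifold with positive transverse bisectional curvature, so I would show that the associated K\"{a}hler cone is biholomorphic to $\C^{n}\setminus\{0\}$ with a linear diagonal Reeb action. One route is to run the flow to convergence to a transverse K\"{a}hler--Einstein, hence Sasaki--Einstein, metric with positive transverse curvature. Its cone is then a Ricci-flat cone with positive curvature, which one identifies with $\C^n$ via a Mori-type rational curve argument on the orbifold or via Tanaka-type rigidity of the cone. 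Quotienting by the Reeb $\C^*$-action then gives $\CP(\w)$. Finally, in the irregular case I would use that the closure of the Reeb flow is a torus $T$ acting by automorphisms. Approximating $\xi$ by quasi-regular Reeb fields $\xi_k$ in $\mathrm{Lie}(T)$ via type I deformations keeps the transverse bisectional curvature nonnegative after a small type II correction. Applying the quasi-regular case, a flat factor or a symmetric factor is rigid enough to force a quasi-regular Reeb field, since the centralizer torus would be too small. Irregularity therefore means the cone is a single positive factor, namely $\C^n\setminus\{0\}$. Hence the universal cover of $X$ is a weighted Sasaki sphere.
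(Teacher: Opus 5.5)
The paper does not prove this statement; it quotes it from He--Sun \cite{HS15}, whose positive case rests on \cite{HS16}. Your framework for the quasi-regular splitting is a reasonable Sasaki transcription of Gu's proof of Mok's theorem: transverse Sasaki--Ricci flow, weak and Brendle--Schoen strong maximum principles for basic tensors, then Berger. But the step you flag as the main obstacle fails as proposed. The Sasaki--Ricci flow keeps $\xi$ fixed, hence keeps the leaf space fixed. When that leaf space is $\CP(\w)$ with unequal weights, it carries no orbifold K\"{a}hler--Einstein metric at all. Indeed, by Martelli--Sparks--Yau volume minimization, the standard Reeb field is, up to scale, the only one on $\C^n\setminus\{0\}$ admitting a Sasaki--Einstein metric. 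So ``flow to a Sasaki--Einstein metric, then rigidity'' breaks down exactly in the cases the theorem is about. In general you only get convergence to a Sasaki--Ricci soliton with positive transverse bisectional curvature, which is already nontrivial. You must then vary the Reeb field to reach an Einstein structure, and that is the core of \cite{HS16}. The Mori- or Tanaka-type alternatives you mention are not available off the shelf for orbifolds. There are also two smaller gaps. First, (2) asserts an isometric splitting of the \emph{original} $\omega^T$. You must therefore decompose $\omega^T$ itself and flow factor by factor, or show that the decomposition at $t>0$ persists as $t\to 0$. Second, the de Rham and Berger steps must be done transversally on $X$ or in an orbifold version, since the orbifold universal cover of the leaf space need not be a manifold.

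Your irregular case has a genuine gap. Nonnegative transverse bisectional curvature is a closed condition, not an open one. A type I deformation to a nearby quasi-regular $\xi_k$ perturbs the transverse metric, and any bisectional curvature that vanishes can become negative. Vanishing happens exactly when there is a flat factor, a factor of rank $\ge 2$, or more than one factor. Nothing supports a ``small type II correction'' that restores nonnegativity. The claim that a symmetric factor forces quasi-regularity because the torus is too small is also false. The cone over any $G/K$ carries a maximal torus of $G$ together with the fibre circle. Its Reeb cone therefore has dimension at least $2$ and contains irregular Reeb fields; examples are the affine quadric cone over $Q^m$ and the conifold over $\CP^1\times\CP^1$. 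For flat factors the claim is at best unproved. Your argument also never rules out several positive factors $\CP(\w_1)\times\CP(\w_2)$, whose cones also admit irregular Reeb fields, so ``irregular implies a single positive factor'' is unsupported. What is needed is an argument for the irregular $\xi$ itself. It must show that the flowed transverse structure is neither transversally reducible nor transversally locally symmetric, so that the transverse bisectional curvature becomes positive and the positive classification of \cite{HS16} applies. Your sketch contains no such argument.
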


\subsection{Structure of compact Vaisman manifolds}

We review basics of locally conformally K\"{a}hler manifolds and Vaisman manifolds in this subsection. Standard references include, for example, \cite{Vai80,Vai82, OV24}. 

\begin{definition}\label{def: lck manifolds}
    Let $(M,g)$ be a Hermitian manifold. It is \textit{locally conformally K\"{a}hler} (or \textit{l.c.K.} in short) if there is a 1-form $\theta$ such that 
    \begin{equation}
        d\omega = \omega\wedge\theta, \quad d\theta = 0
    \end{equation}
    where $\omega$ is the fundamental form of $g$. $\theta$ is called the \textit{Lee form} of $(M,g)$.
\end{definition}

\begin{definition}\label{def: vaisman manifolds}
    An l.c.K. manifold $(M,g)$ is \textit{Vaisman} if its Lee form is nonzero and parallel, i.e., $\nabla \theta=0$, where $\nabla$ is the Levi-Civita connection of $g$. 
\end{definition}
\begin{remark}
    This definition implies that $|\theta|$ is a nonzero constant. In this paper, we will always scale properly so that $|\theta|\equiv \sqrt{2}$. 
\end{remark}

Vaisman manifolds were initially introduced in \cite{Vai80,Vai82} as a generalization of diagonal Hopf manifolds. They form one of the major families of non--K\"{a}hler manifolds. They are closely related to Kähler cones. In fact, the following structure theorem holds:

\begin{theorem}[\cite{GOP05}]\label{thm: structure theorem}
    Let $(M,g)$ be a complete Vaisman manifold with exact Lee form $\theta$. There exists a K\"{a}hler cone $(C(X),J,dr^2+r^2g_X)$ with complete Sasaki link $(X,g_X)$ such that $(M,g)$ is biholomorphic and isometric to $(C(X),g^V)$, where $g^V=\frac{dr^2}{r^2}+g_X$ is a Vaisman metric on $C(X)$. 
    
    In particular, this holds for simply conneced complete Vaisman manifold.
\end{theorem}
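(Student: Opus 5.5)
The statement to prove is Theorem \ref{thm: structure theorem}: a complete Vaisman manifold with exact Lee form is a Kähler cone carrying the metric $g^V=\frac{dr^2}{r^2}+g_X$.

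Since $\theta$ is exact, I write $\theta=d\varphi$ for a smooth function $\varphi$ on $M$. Then $d\omega=\omega\wedge d\varphi$, so $d(e^{-\varphi}\omega)=e^{-\varphi}(d\omega-d\varphi\wedge\omega)=0$. I would fix the sign convention so that $\widetilde\omega=e^{-\varphi}\omega$ is the globally conformal Kähler metric, with $\widetilde g=e^{-\varphi}g$. The candidate radial function is $r=e^{-\varphi/2}$, so that $\widetilde g=r^2g$. The goal is then to show $\widetilde g=dr^2+r^2g_X$ for a level set $X=\{r=1\}$, with $r\partial_r$ real holomorphic.

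The first step is to use $\nabla\theta=0$. The dual vector field $B=\theta^\sharp$ is parallel of constant length $\sqrt2$, and $\varphi$ has gradient $B$ with no critical points. Since $g$ is complete and $B$ is parallel, the flow of $B$ is by isometries of $g$ and is complete. Completeness plus the parallel gradient gives a de Rham-type splitting of $(M,g)$ as a Riemannian product $\R\times X$, with $X=\varphi^{-1}(0)$ and $g=\frac{1}{2}d\varphi^2+g_X$. This is essentially a Cheeger–Gromoll style argument for a function with parallel gradient, or simply: the gradient flow maps $X$ isometrically onto each level set. Substituting $r=e^{-\varphi/2}$ gives $d\varphi=-2\,dr/r$, so $g=\frac{2dr^2}{r^2}+g_X$. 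With the normalization $|\theta|=\sqrt2$ and the convention chosen in the paper, I expect this to become $\frac{dr^2}{r^2}+g_X$; constants may need adjusting, possibly by rescaling $\varphi$.

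Then $\widetilde g=r^2g=dr^2+r^2g_X$ is exactly a cone metric. It is Kähler for $J$ because $\widetilde\omega$ is closed.

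It remains to check that $r\partial_r$ is holomorphic. Here $r\partial_r$ is a constant multiple of $B$. On a Vaisman manifold the Lee field $B$ is parallel, hence Killing, and it is standard that $\mathcal L_BJ=0$. To see this, note that $\nabla J$ for an l.c.K. metric is expressed algebraically through $\theta$, $\omega$ and $J$. Differentiating along $B$, the parallelism of $\theta$ and the relation $\mathcal L_B\omega=0$ (which follows from Cartan's formula, $d\omega=\omega\wedge\theta$ and $\iota_B\theta=2$) give that $B$ preserves $J$. So $(C(X),J,\widetilde g)$ is a Kähler cone with link $X$. The link is complete as a closed totally geodesic submanifold of a complete manifold. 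The identity map is then the required biholomorphic isometry $(M,g)\cong(C(X),g^V)$.

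For the simply connected case, the closed form $\theta$ is exact, so the theorem applies.

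I expect the main obstacle to be the global splitting: verifying that the level set $X$ is connected and that the flow of $B$ gives a diffeomorphism $\R\times X\to M$. Completeness of $g$ together with $|B|$ being constant handles this, since the flow exists for all time and $\varphi$ increases linearly along it.
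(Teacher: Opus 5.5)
The paper does not prove this statement; it quotes it from \cite{GOP05}. Your argument is the standard direct proof, and it is sound in outline: the flow of the parallel Lee field $B$ splits $(M,g)$ globally as $\R\times X$, and $e^{-\varphi}g$ is then the cone metric.

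The global splitting you worry about is immediate. The flow $\Phi_t$ of $B$ consists of isometries preserving $B$, and $\varphi\circ\Phi_t=\varphi+t|\theta|^2$. So $(t,x)\mapsto\Phi_t(x)$ is a diffeomorphism $\R\times X\to M$, with inverse $p\mapsto\bigl(\varphi(p)/|\theta|^2,\Phi_{-\varphi(p)/|\theta|^2}(p)\bigr)$. It pulls $g$ back to $|\theta|^2dt^2+g_X$, and $X$ is connected because $M$ is. Two points do need repair.

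\textbf{The constant.} It cannot be fixed by rescaling $\varphi$. For $n\ge 2$, wedging with $\omega$ is injective on $1$-forms, so $d(u\,e^{-\varphi}\omega)=e^{-\varphi}du\wedge\omega=0$ forces $du=0$. Hence the Kähler metric in the conformal class is unique up to a constant, and $d\log r=-\theta/2$ is forced.

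The Lee form of $\frac{dr^2}{r^2}+g_X$ is $-2\,dr/r$, which has Riemannian length $2$. So the statement as written presupposes $|\theta|_g=2$. The paper's normalization $|\theta|=\sqrt2$ matches this when read as the Hermitian norm of $\theta^{1,0}$, since $|\theta|_g^2=2|\theta^{1,0}|^2$.

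With $\theta(B)=4$, your computation gives $g=\frac14 d\varphi^2+g_X=\frac{dr^2}{r^2}+g_X$ exactly. For an arbitrary constant $|\theta|_g$ you get $g=\frac{4}{|\theta|^2}\bigl(\frac{dr^2}{r^2}+\frac{|\theta|^2}{4}g_X\bigr)$. That is a homothetic copy of the Vaisman metric of the cone over $(X,\frac{|\theta|^2}{4}g_X)$.

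\textbf{Holomorphy of $B$.} Your parenthetical derivation of $\mathcal L_B\omega=0$ does not go through as stated. Cartan's formula gives $\mathcal L_B\omega=d(\iota_B\omega)+(\iota_B\omega)\wedge\theta+\theta(B)\,\omega$, and the facts you list do not control $d(\iota_B\omega)$.

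The clean route is this. Since $\nabla B=0$, you have $\mathcal L_BJ=\nabla_BJ$. Combining $\widetilde\nabla J=0$ with the conformal change of the Levi-Civita connection yields the l.c.K. identity
\[
2(\nabla_XJ)Y=\theta(JY)X-g(X,JY)B-\theta(Y)JX+g(X,Y)JB.
\]
Its right-hand side vanishes identically at $X=B$ because $\theta=g(B,\cdot)$, so $\mathcal L_BJ=0$.
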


\section{Structure of the universal cover}\label{sec: structure of the universal cover}

In this section, we provide a proof of the first statement of Theorem \ref{thm: main}. 

Consider a K\"{a}hler cone $(C(X),J,g)$. We compute the Chern curvature tensor $R^V$ of the associated Vaisman metric $g^V$ on $C(X)$. Recall that a set of local transverse holomorphic coordinates on $C(X)$ consists of holomorphic functions $z^1,\cdots,z^{n-1}$ such that
$\mathcal{L}_{\xi}z^i=\mathcal{L}_{r\frac{\partial}{\partial r}}z^i=0$ and $dz^i$'s are linearly independent everywhere. Furthermore, there exists a radial holomorphic coordinate $w$ such that $w,z^1,\cdots,z^{n-1}$ form a system of local holomorphic coordinates on $C(X)$. In these coordinates the transverse K\"{a}hler form $\omega^T$ depends only on the transverse coordinates $z^1,\cdots,z^{n-1}$. 

The following local expression for the K\"{a}hler cone metric is standard:
\begin{lemma}\label{lem: local holomorphic coordinates}
    There exists a K\"{a}hler potential $K(z^1,\cdots,z^{n-1})$ of $\omega^T$ such that $dK=0$ at a given point $p\in C(X)$ and 
    \begin{equation}\label{eq: kahler cone metric}
        \omega=\sqrt{-1}r^2\left(\frac{1}{2}dw\wedge d\Bar{w}+dw\wedge\Bar{\partial}K+\partial K\wedge d\Bar{w}+2\partial K\wedge\Bar{\partial K}+\partial\Bar{\partial}K\right).
    \end{equation}
    In particular,
    \begin{equation}\label{eq: kahler cone metric at one point}
        \omega=\sqrt{-1}r^2\left(\frac{1}{2}dw\wedge d\Bar{w}+\partial\Bar{\partial}K\right)
    \end{equation}
    at $p$.
\end{lemma}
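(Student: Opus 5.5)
The plan is to write the radial function in adapted holomorphic coordinates as
\begin{equation*}
r^2=e^{w+\bar w+2K(z)},
\end{equation*}
and then expand $\omega=\frac{\sqrt{-1}}{2}\partial\bar\partial r^2$. A direct check shows this is exactly the right ansatz. With $F=w+\bar w+2K$ we have
\begin{equation*}
\partial\bar\partial e^F=e^F\left(\partial F\wedge\bar\partial F+\partial\bar\partial F\right),
\end{equation*}
where $\partial F=dw+2\partial K$ and $\partial\bar\partial F=2\partial\bar\partial K$. Multiplying by $\frac{\sqrt{-1}}{2}$ gives
\begin{equation*}
\sqrt{-1}\,r^2\left(\tfrac12 dw\wedge d\bar w+dw\wedge\bar\partial K+\partial K\wedge d\bar w+2\partial K\wedge\bar\partial K+\partial\bar\partial K\right),
\end{equation*}
which is \eqref{eq: kahler cone metric}. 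If moreover $dK(p)=0$, every term involving $\partial K$ vanishes at $p$, which gives \eqref{eq: kahler cone metric at one point}. So the work is to construct $w$, $z^i$ and $K$.

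\textbf{Step 1: coordinates.} Since $r\frac{\partial}{\partial r}$ is real holomorphic and $\xi=Jr\frac{\partial}{\partial r}$, the field $Z=\frac12\left(r\frac{\partial}{\partial r}-\sqrt{-1}\xi\right)$ is a nowhere vanishing holomorphic vector field. By the holomorphic flow-box theorem, near $p$ there are holomorphic coordinates $(w,z^1,\dots,z^{n-1})$ with $Z=\frac{\partial}{\partial w}$. Because $Z$ and $\bar Z$ both annihilate $z^i$, we get $\mathcal L_{r\partial_r}z^i=\mathcal L_\xi z^i=0$, so the $z^i$ are transverse holomorphic coordinates. Also $Zw=1$ and $\bar Z w=0$ give $r\partial_r w=1$ and $\xi w=\sqrt{-1}$ (for real vector fields $r\partial_r$ and $\xi$, and up to the sign convention for $\xi$). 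Hence $r\partial_r(w+\bar w)=2$ and $\xi(w+\bar w)=0$.

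\textbf{Step 2: the potential.} Put $\psi=\log r^2$. Then $r\partial_r\psi=2$, and $\xi\psi=0$ because $\xi$ is Killing and tangent to the level sets of $r$. So $\psi-(w+\bar w)$ is annihilated by both $r\partial_r$ and $\xi$, i.e.\ it is a basic function of $z$ alone. Call it $2K(z)$. By the expansion above, the part of $\omega$ transverse to $Z$ is $\sqrt{-1}r^2\partial\bar\partial K$, while $\omega=r\,dr\wedge\eta+r^2\omega^T$. Comparing the two shows that $\sqrt{-1}\partial\bar\partial K$ equals $\omega^T$, up to the normalization constant fixed by the paper's convention. In particular $K$ is a transverse Kähler potential.

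\textbf{Step 3: normalization at $p$.} Let $h(z)=2\sum_i\frac{\partial K}{\partial z^i}(p)\,z^i$, a linear holomorphic function. Replace $w$ by $w'=w+h(z)$ and $K$ by $K'=K-\operatorname{Re}h$. This leaves $w+\bar w+2K$ unchanged, keeps $Zw'=1$, and does not change $\partial\bar\partial K$. Since $\partial K'(p)=\partial K(p)-\frac12\partial h(p)=0$ and $K'$ is real, $dK'(p)=0$.

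\textbf{Main obstacle.} The only delicate point is Step 1: producing a holomorphic $w$ with $Zw=1$ that is compatible with the transverse coordinates. The holomorphic flow-box theorem for the nonvanishing field $Z$ handles this. Everything after that is bookkeeping of constants.
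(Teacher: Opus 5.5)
Your proposal is correct. The paper gives no proof of this lemma and simply calls the local expression standard. Your argument is exactly the standard derivation behind it: a holomorphic flow box for $Z=(r\partial_r)^{1,0}$, the observation that $\log r^2-(w+\bar w)$ is basic, and a holomorphic linear shift of $w$ that kills $dK(p)$.

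One tightening in Step 2: the hedge ``up to the normalization constant'' is unnecessary, and the comparison via ``the part transverse to $Z$'' is vague, since $2\partial K\wedge\bar\partial K$ contains no $dw$ either. You can fix the constant directly. The contact form extended to the cone is $\eta=g(\xi,\cdot)/r^2=d^c\log r$ with $d^c=\sqrt{-1}(\bar\partial-\partial)$. Hence $\omega=\frac{\sqrt{-1}}{2}\partial\bar\partial r^2=\frac12 d(r^2\eta)=r\,dr\wedge\eta+\frac12 r^2d\eta$. This gives $\omega^T=\frac12 d\eta=\sqrt{-1}\partial\bar\partial K$ exactly. This exact normalization, $g^T_{i\bar j}=\partial_i\partial_{\bar j}K$, is what Lemma 3.2 relies on to obtain the $-2g^T_{i\bar l}g^T_{k\bar j}$ term.
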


The curvature form $R^V$ can be expressed explicitly in terms of the transverse K\"{a}hler metric $g^T$ and the transverse curvatures, as shown in the lemma below:

\begin{lemma}\label{lem: chern curvature}
    Under the choice of local holomorphic coordinates in Lemma \ref{lem: local holomorphic coordinates}, the Chern curvature tensor $R^V$ of the Vaisman metric $g^V$ at the given point $p$ has the following local components:
    \begin{equation}
        \begin{aligned}
            R^V_{i\Bar{j}k\Bar{l}}&= R^T_{i\Bar{j}k\Bar{l}}-2g^T_{i\Bar{l}}g^T_{k\Bar{j}},\\
            R^V_{i\Bar{j}w\Bar{w}}&= g^T_{i\Bar{j}},\\
            R^V_{i\Bar{j}w\Bar{l}}&=0,\\
            R^V_{w\Bar{j}k\Bar{l}}&=R^V_{w\Bar{j}w\Bar{l}}=R^V_{w\Bar{j}w\Bar{w}}=0,\\
            R^V_{w\Bar{w}k\Bar{l}}&=R^V_{w\Bar{w}k\Bar{w}}=R^V_{w\Bar{w}w\Bar{w}}=0.
        \end{aligned}
    \end{equation}
\end{lemma}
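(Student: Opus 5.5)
The plan is to compute directly from the standard formula for the Chern curvature of a Hermitian metric in holomorphic coordinates,
\[
R_{a\bar b c\bar d}=-\partial_a\partial_{\bar b}h_{c\bar d}+h^{p\bar q}\,\partial_a h_{c\bar q}\,\partial_{\bar b}h_{p\bar d},
\]
applied to $g^V=\frac{dr^2}{r^2}+g_X$. Its fundamental form is $\omega^V=\omega/r^2$. Dividing \eqref{eq: kahler cone metric} by $r^2$ gives the metric coefficients
\[
h_{w\bar w}=\tfrac12,\qquad h_{w\bar l}=K_{\bar l},\qquad h_{i\bar w}=K_i,\qquad h_{i\bar j}=2K_iK_{\bar j}+K_{i\bar j},
\]
up to the overall normalization convention relating $\omega$ and $g$. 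These use the coordinates $(w,z^1,\dots,z^{n-1})$ of Lemma~\ref{lem: local holomorphic coordinates}. The key observation is that every coefficient depends only on the transverse coordinates $z$. The factor $r^2$ has disappeared, and $K$ depends only on $z$.

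\textbf{Step 1: normalization at $p$.}
Before differentiating, I will refine the choice of $K$ and of the transverse coordinates at $p$.
\begin{enumerate}
\item Subtract a pluriharmonic function, namely the real part of a holomorphic polynomial in $z$, so that $dK=0$ and also $K_{ik}=0$ at $p$.
\item Choose transverse K\"ahler normal coordinates for $\omega^T$ at $p$, so that $K_{i\bar j}=g^T_{i\bar j}$ and the third derivatives $K_{i\bar j k}$, $K_{i\bar j\bar k}$ vanish at $p$.
\end{enumerate}
This is possible because $\omega^T=\sqrt{-1}\partial\bar\partial K$ is an honest K\"ahler metric in the $z$ variables. A transverse holomorphic change of coordinates can be absorbed into the choice of $w$, by adjusting $w$ by a holomorphic function of $z$ and correspondingly modifying $K$. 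The first (and mildly delicate) point is to check that this adjustment preserves the form \eqref{eq: kahler cone metric}. At $p$ the metric is then block diagonal, with $h^{w\bar w}=2$ and $h^{p\bar q}=(g^T)^{p\bar q}$.

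\textbf{Step 2: components with a $w$ in a derivative slot.}
Since nothing depends on $w$, all $\partial_w$ and $\partial_{\bar w}$ derivatives vanish. Hence every component with $w$ in the first or second (derivative) slot is zero. This gives the last two lines of the lemma.

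\textbf{Step 3: $R^V_{i\bar j w\bar w}$.}
For this component, $-\partial_i\partial_{\bar j}h_{w\bar w}=0$. The quadratic term is $h^{p\bar q}\,\partial_i h_{w\bar q}\,\partial_{\bar j}h_{p\bar w}=(g^T)^{p\bar q}K_{i\bar q}K_{p\bar j}=g^T_{i\bar j}$. The $w\bar w$ part of the quadratic term involves $\partial_i h_{w\bar w}=0$ and drops out.

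\textbf{Step 4: $R^V_{i\bar j w\bar l}$.}
Both $-\partial_i\partial_{\bar j}K_{\bar l}$ and the cross terms involve third derivatives of $K$, or first derivatives of $K$ multiplied by something. These all vanish at $p$ by the normalization of Step 1.

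\textbf{Step 5: $R^V_{i\bar j k\bar l}$.}
Expanding $-\partial_i\partial_{\bar j}(2K_kK_{\bar l}+K_{k\bar l})$ at $p$ gives
\[
-2K_{ik}K_{\bar j\bar l}-2K_{k\bar j}K_{i\bar l}-\partial_i\partial_{\bar j}K_{k\bar l}.
\]
\begin{itemize}
\item The first term is zero by the choice $K_{ik}(p)=0$.
\item The second term is $-2g^T_{i\bar l}g^T_{k\bar j}$.
\item The third term, together with the transverse block of the quadratic term, $(g^T)^{p\bar q}\partial_iK_{k\bar q}\partial_{\bar j}K_{p\bar l}$, is exactly $R^T_{i\bar jk\bar l}$.
\item The remaining $w$-part of the quadratic term is $h^{w\bar w}\partial_ih_{k\bar w}\partial_{\bar j}h_{w\bar l}=2K_{ik}K_{\bar j\bar l}$, which again vanishes.
\end{itemize}

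\textbf{Main obstacle.}
I expect the main obstacle to be bookkeeping rather than ideas. Two things need care. First, the normalization of Step 1 must be compatible with the special form of the coordinates, in particular the freedom to change $w$. Second, sign and factor conventions (the $\frac12$ in $h_{w\bar w}$, and the $\sqrt{-1}$ and $\tfrac12$ in passing from $\omega$ to $g$) must be tracked so that the constants $1$ and $-2$ come out exactly as stated.
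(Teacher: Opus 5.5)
Your proposal is correct and follows essentially the same route as the paper: a direct evaluation of the Chern curvature formula, using the $w$-independent coefficients of $g^V=\omega/r^2$ read off from \eqref{eq: kahler cone metric} and the block-diagonal form of $g^V$ at $p$. The only difference is your extra normalization in Step 1 ($K_{ik}(p)=0$ and transverse normal coordinates), which the paper does without: the two terms $\pm 2K_{ik}K_{\bar j\bar l}$ in $R^V_{i\bar jk\bar l}$ cancel each other, and $R^V_{i\bar jw\bar l}$ reduces to $K_{i\bar l\bar j}-K_{i\bar j\bar l}=0$ by symmetry of mixed partials, so $dK(p)=0$ alone suffices.
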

\begin{proof}
    We present the computation of $(R_V)_{i\Bar{j}k\Bar{l}}$. The other components can be calculated in the same manner. Recall the general formula for the Chern curvature tensor of Hermitian metrics, 
    \[R^V_{i\Bar{j}k\Bar{l}}=(g^V)^{\alpha\Bar{\beta}}\frac{\partial g^V_{k\Bar{\beta}}}{\partial z^i}\frac{\partial g^V_{\alpha\Bar{l}}}{\partial \Bar{z}^j}-\frac{\partial^2 g^V_{k\Bar{l}}}{\partial z^i\Bar{z}^j},\]
    where the indices $\alpha$ and $\beta$ run over all coordinates $w,z^1,\cdots,z^{n-1}$. By \eqref{eq: kahler cone metric} and \eqref{eq: 
    kahler cone metric at one point}, one deduces that
    \begin{equation}\label{eq: vaisman metric}
        \begin{aligned}
            &(g^V)_{w\Bar{w}}=\frac{1}{2},\\
            &(g^V)_{w\Bar{i}}=\frac{\partial K}{\partial \bar{z}^i},\\
            &(g^V)_{i\Bar{j}}=2\frac{\partial K}{\partial z^i}\frac{\partial K}{\partial \Bar{z}^j}+\frac{\partial^2 K}{\partial z^i\partial \Bar{z}^j}.
        \end{aligned}
    \end{equation}
    in an open neighborhood of $p$ and 
    \begin{equation}\label{eq: vaisman metric at one point}
        g_{w\Bar{w}}=\frac{1}{2}, \quad g_{w\Bar{i}}=0, \quad g_{i\Bar{j}}=\frac{\partial^2 K}{\partial z^i\partial\Bar{z}^j}
    \end{equation}
    at $p$. The curvature then follows directly from \eqref{eq: vaisman metric} and \eqref{eq: vaisman metric at one point}.
    \begin{align*}
        R^V_{i\Bar{j}k\Bar{l}}=&(g^V)^{w\Bar{w}}\frac{\partial g^V_{k\Bar{w}}}{\partial z^i}\frac{\partial g^V_{w\Bar{l}}}{\partial \Bar{z}^j}+(g^V)^{pq}\frac{\partial g^V_{k\Bar{q}}}{\partial z^i}\frac{\partial g^V_{p\Bar{l}}}{\partial \Bar{z}^j}-\frac{\partial^2 g^V_{k\Bar{l}}}{\partial z^i\Bar{z}^j}\\
        =&2\frac{\partial^2 K}{\partial z^i\partial z^k}\frac{\partial^2 K}{\partial \Bar{z}^j\partial \Bar{z}^l}+g^{p\Bar{q}}\frac{\partial}{\partial z^i}\left(\frac{\partial^2 K}{\partial z^k\partial \Bar{z}^q}\right)\frac{\partial}{\partial \Bar{z}^j}\left(\frac{\partial^2 K}{\partial z^p\Bar{z}^{l}}\right)\\
        &-\frac{\partial^2}{\partial z^i\Bar{z}^j}\left(2\frac{\partial K}{\partial z^k}\frac{\partial K}{\partial \Bar{z}^l}+\frac{\partial^2 K}{\partial z^k\partial \Bar{z}^l}\right)\\
        =&(g^T)^{p\Bar{q}}\frac{\partial g^T_{k\Bar{q}}}{\partial z^i}\frac{\partial g^T_{p\Bar{l}}}{\partial \Bar{z}^j}-2\frac{\partial^2 K}{\partial z^i\partial \Bar{z}^l}\frac{\partial^2 K}{\partial \Bar{z}^j\partial z^k}-\frac{\partial^2g^T_{k\Bar{l}}}{\partial z^i\partial \Bar{z}^j}\\
        =&\left((g^T)^{p\Bar{q}}\frac{\partial g^T_{k\Bar{q}}}{\partial z^i}\frac{\partial g^T_{p\Bar{l}}}{\partial \Bar{z}^j}-\frac{\partial^2g^T_{k\Bar{l}}}{\partial z^i\partial \Bar{z}^j}\right)-2g^T_{i\Bar{l}}g^T_{k\Bar{j}}\\
        =& R^T_{i\Bar{j}k\Bar{l}}-2g^T_{i\Bar{l}}g^T_{k\Bar{j}}.
    \end{align*}
\end{proof}

Based on this lemma, the condition of nonnegative bisectional curvature for $g^V$ is equivalent to a curvature condition on the transverse bisectional curvature.

\begin{corollary}\label{cor: reduction to kahler cone}
    Let $(C(X),J,g)$ be an $n$ dimensional K\"{a}hler cone. $g^V$ has nonnegative bisectional curvature if and only if $g^T$ satisfies the following curvature condition:
    \begin{equation}\label{eq: key curvature condition}
        R^T_{i\Bar{j}k\Bar{l}}\xi^i\Bar{\xi^j}\zeta^k\Bar{\zeta^l}\ge 2g^T_{i\Bar{l}}g^T_{k\Bar{j}}\xi^i\Bar{\xi^j}\zeta^k\Bar{\zeta^l}, \quad \forall\xi, \zeta\in \C^{n-1}.
    \end{equation}
\end{corollary}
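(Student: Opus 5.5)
The plan is to read the condition off Lemma \ref{lem: chern curvature}. Bisectional curvature of a Hermitian metric is nonnegative at $p$ iff $R^V(X,\bar X,Y,\bar Y)\ge 0$ for all $(1,0)$-vectors $X,Y\in T^{1,0}_pC(X)$. Here $\omega=r^2\omega^V$ is a conformal rescaling by the function $r^2$, which is constant along neither $\xi$ nor the transverse directions at $p$ only through $dr$. In the coordinates of Lemma \ref{lem: local holomorphic coordinates} with $dK=0$ at $p$, I would therefore decompose
\[
X=aw\partial_w+\sum_i\xi^i\partial_{z^i},\qquad Y=b\partial_w+\sum_k\zeta^k\partial_{z^k},
\]
writing $X=a\partial_w+\xi$ and $Y=b\partial_w+\zeta$.

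Expanding $R^V(X,\bar X,Y,\bar Y)$ by multilinearity and using the vanishing components in Lemma \ref{lem: chern curvature} should give
\[
R^V(X,\bar X,Y,\bar Y)=\bigl(R^T_{i\bar jk\bar l}-2g^T_{i\bar l}g^T_{k\bar j}\bigr)\xi^i\bar\xi^j\zeta^k\bar\zeta^l+|b|^2g^T_{i\bar j}\xi^i\bar\xi^j .
\]
The only surviving mixed term is $R^V_{i\bar jw\bar w}$, since every component carrying a $w$ or $\bar w$ in the first pair vanishes, as does $R^V_{i\bar jw\bar l}$ together with its conjugate-type counterparts $R^V_{i\bar jk\bar w}$. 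The latter follows from the same computation, or from the conjugate symmetry $\overline{R_{i\bar jk\bar l}}=R_{j\bar i l\bar k}$.

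For the implication ``$g^V\ge0\Rightarrow$ \eqref{eq: key curvature condition}'', I take $b=0$ and arbitrary $\xi,\zeta\in\C^{n-1}$, which isolates exactly the left side minus the right side of \eqref{eq: key curvature condition}. For the converse, the extra term $|b|^2g^T(\xi,\bar\xi)$ is nonnegative because $g^T$ is positive definite, and the first term is nonnegative by \eqref{eq: key curvature condition}.

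Since $p$ is arbitrary and the transverse tensors $R^T,g^T$ are independent of the point on a leaf, the pointwise statement gives the global equivalence. The condition is stated on $\C^{n-1}$, identified with transverse $(1,0)$-vectors at any point.

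The main obstacle is making sure the components listed in Lemma \ref{lem: chern curvature} exhaust all index types, in particular $R^V_{i\bar j k\bar w}$ and $R^V_{w\bar j k\bar w}$, which are not explicitly displayed. The Chern curvature of a Hermitian (non-Kähler) metric lacks the full Kähler symmetries, so these cannot simply be swapped into listed ones. I would verify them directly using the formula $R_{i\bar jk\bar l}=g^{\alpha\bar\beta}\partial_ig_{k\bar\beta}\partial_{\bar j}g_{\alpha\bar l}-\partial_i\partial_{\bar j}g_{k\bar l}$. The key inputs are that $g^V_{w\bar w}$ is constant, that $g^V_{k\bar w}=\partial_kK$, whose $\partial_i\partial_{\bar j}$-derivative is a third derivative of $K$ that can be killed by choosing $K$ with vanishing holomorphic third-order Taylor terms at $p$ (normal coordinates for $g^T$), and that all $\partial_w$-derivatives vanish.
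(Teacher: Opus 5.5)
Your proposal is correct and follows the paper's route: the paper reads the corollary directly off Lemma \ref{lem: chern curvature}, and your expansion $R^V(X,\bar X,Y,\bar Y)=(R^T_{i\bar jk\bar l}-2g^T_{i\bar l}g^T_{k\bar j})\xi^i\bar\xi^j\zeta^k\bar\zeta^l+|b|^2g^T_{i\bar j}\xi^i\bar\xi^j$ (take $b=0$ for one direction, use positivity of $g^T$ for the other) is exactly that implicit step. The unlisted components you worry about are handled more simply than by normal coordinates. The conjugate symmetry $\overline{R_{i\bar jk\bar l}}=R_{j\bar il\bar k}$ does hold for the Chern curvature, which gives $R^V_{i\bar jk\bar w}=0$. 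Every component with $w$ in the first slot (or $\bar w$ in the second) vanishes, because the coefficients of $g^V$ do not depend on $w$.
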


By combining this with Theorem \ref{thm: structure theorem}, we may obtain the following equivalent characterization of compact Vaisman manifolds with nonnegative bisectional curvature:
\begin{corollary}
    A compact Vaisman manifold $(M,g)$ has nonnegative bisectional curvature if and only if its universal cover $(\widetilde{M},\widetilde{g})$ is biholomorphic and isometric to $(C(X),J,\frac{dr^2}{r^2}+g_X)$, where $(C(X),J,dr^2+r^2g_X)$ is a K\"{a}hler cone satisfying \eqref{eq: key curvature condition}.
\end{corollary}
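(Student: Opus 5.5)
The argument has two ingredients: pass to the universal cover via Theorem \ref{thm: structure theorem}, and read off the curvature condition pointwise from Lemma \ref{lem: chern curvature} and Corollary \ref{cor: reduction to kahler cone}.

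\emph{Reduction to the cover.} Nonnegativity of bisectional curvature is a local condition, so $(M,g)$ has nonnegative bisectional curvature if and only if the pulled-back metric $\widetilde{g}$ on $\widetilde{M}$ does. The lift $\widetilde{g}$ is again Vaisman: its Lee form is the pullback of $\theta$, which is still closed and parallel. Since $M$ is compact, $\widetilde{g}$ is complete. Because $\widetilde{M}$ is simply connected, the closed Lee form is exact.

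\emph{Identification with a cone.} Theorem \ref{thm: structure theorem} now applies and gives a K\"ahler cone $(C(X),J,dr^2+r^2g_X)$ with complete Sasaki link. It also gives a biholomorphic isometry $(\widetilde{M},\widetilde{g})\cong (C(X),J,\frac{dr^2}{r^2}+g_X)$. In other words, $\widetilde{g}$ is exactly the metric $g^V$ associated to this cone.

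\emph{Applying Corollary \ref{cor: reduction to kahler cone}.} By that corollary, $g^V$ has nonnegative bisectional curvature if and only if \eqref{eq: key curvature condition} holds. To justify this I would check that Lemma \ref{lem: chern curvature} covers every point and every pair of directions. Each $p\in C(X)$ admits coordinates as in Lemma \ref{lem: local holomorphic coordinates}. Write vectors as $u=u^w\partial_w+u^i\partial_i$ and $v=v^w\partial_w+v^k\partial_k$. The only nonzero components are $R^V_{i\bar jk\bar l}$ and $R^V_{i\bar jw\bar w}=g^T_{i\bar j}$, so
\[
R^V(u,\bar u,v,\bar v)=\bigl(R^T_{i\bar jk\bar l}-2g^T_{i\bar l}g^T_{k\bar j}\bigr)u^i\bar u^jv^k\bar v^l+|v^w|^2g^T(u',\bar u'),
\]
where $u'$ denotes the transverse part of $u$. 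The second term is always nonnegative. Taking $v^w=0$ shows that nonnegativity forces \eqref{eq: key curvature condition}, and conversely \eqref{eq: key curvature condition} makes the first term nonnegative.

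\emph{Converse and the main obstacle.} Conversely, suppose $\widetilde{M}$ is isometric to such a cone with $g^V$ satisfying \eqref{eq: key curvature condition}. Then $\widetilde{g}$ has nonnegative bisectional curvature, hence so does $g$, since the covering map is a local isometry. The step that needs care is the asymmetry of the Chern tensor of a non-K\"ahler metric: components such as $R^V_{w\bar w k\bar l}=0$ and $R^V_{i\bar jw\bar w}=g^T_{i\bar j}$ differ. So one must fix the convention for which index pair carries the first vector. Under either convention the cross term is a square times $g^T$, which is nonnegative, so the equivalence goes through.
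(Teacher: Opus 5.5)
Your proposal is correct and follows the paper's route. You lift to the complete, simply connected Vaisman cover with exact Lee form, identify it with $(C(X),g^V)$ via Theorem~\ref{thm: structure theorem}, and invoke Corollary~\ref{cor: reduction to kahler cone}, unpacking it correctly from Lemma~\ref{lem: chern curvature}. The only point to make explicit is that the component $R^V_{w\bar j k\bar w}$ and its conjugate, which the lemma does not literally list, also vanish because the coefficients of $g^V$ in these coordinates are independent of $w$; this fully justifies your expansion of $R^V(u,\bar u,v,\bar v)$.
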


Therefore, it remains to prove that any simply connected K\"{a}hler cone satisfying \eqref{eq: key curvature condition} is one of the cones described in Theorem \ref{thm: main}. To that end, we first show that the Sasaki link is compact. Indeed, \eqref{eq: key curvature condition} implies that the transverse holomorphic sectional curvature is bounded below by $2$. Compactness then follows from Corollary \ref{cor: compactness}. 

Given the compactness of the Sasaki link and the fact that \eqref{eq: key curvature condition} implies nonnegative transverse bisectional curvature, we are able to invoke Theorem \ref{thm: He-Sun} to finish the proof. We consider the irregular and quasi--regular cases separately.

\subsection{Irregular case} The irregular case of Theorem \ref{thm: He-Sun} implies that $(X,g_X,J)$ is a weighted Sasaki sphere. Therefore, the K\"{a}hler cone is biholomorphic to $(\C^n)^\times$, which admits a regular K\"{a}hler cone structure whose leaf space is $\CP^{n-1}$.

\subsection{Quasi--regular case} If $(C(X),J,dr^2+r^2g_X)$ is quasi--regular, $X\to X/\mathcal{F}_\xi$ is an orbifold principle $S^1$ bundle. The associated long exact sequence
\begin{equation}
    \cdots \to \pi_1(S^1)\to \pi_1(X) \to \pi^{orb}_1(X/\mathcal{F}_\xi)\to \pi_0(S^1)\to \cdots 
\end{equation}
implies $\pi^{orb}_1(X/\mathcal{F}_\xi) = 1$ since $X$ is simply connected. By (2) of Theorem \ref{thm: He-Sun}, $X/\mathcal{F}_\xi$ is isomorphic to the product in \eqref{eq: quasiregular case}.

The positivity of transverse holomorphic sectional curvature rules out the Euclidean factor in the product. Therefore,
\begin{equation*}
    X/\mathcal{F}_\xi \cong \CP(\w_1)\times\cdots\times\CP(\w_k)\times N_1\times\cdots\times N_l.
\end{equation*} 
This completes the proof.

\section{Nonnegatively curved Vaisman metrics on K\"{a}hler cones}\label{sec: vaisman metrics}

We prove the second part of Theorem \ref{thm: main}: any simply connected quasi--regular K\"{a}hler cone whose leaf space is as in \eqref{eq: leaf space} covers a compact Vaisman manifold with nonnegative bisectional curvature. 

Suppose $(C(X), J, dr^2+r^2g_X)$ is such a K\"{a}hler cone. We first construct a new K\"{a}hler cone metric with curvature condition
\begin{equation}\label{eq: curvature condition}
    R^T(\xi,\Bar{\xi},\zeta,\Bar{\zeta})\ge c|g^T(\xi,\Bar{\zeta})|^2,
\end{equation}
where $c$ is a positive constant. For this purpose, recall the structure theorem of quasi--regular K\"{a}hler cones (cf. Theorem \ref{thm: quasi-regular kahler cones}), which implies that $C(X)$ is biholomorphic to $(L^{-1})^\times$, where $L$ is a positive orbifold line bundle over $X/\mathcal{F}_\xi$. Since $X/\mathcal{F}_\xi$ is given as in \eqref{eq: leaf space}, 
\begin{equation}
    L = \mathcal{O}_{\CP(\w_1)}(n_1)\otimes\cdots\otimes\mathcal{O}_{\CP(\w_k)}(n_k)\otimes L_1^{m_1}\otimes\cdots\otimes L_l^{m_l}.
\end{equation}
where $L_j$ is the positive generator of $\Pic(N_j)$ and $n_i, m_j$ are all positive integers. We will firstly find an orbifold K\"{a}hler metric in $\pi c^{orb}_1(L)$ satisfying \eqref{eq: curvature condition}.

\begin{lemma}\label{lem: weighted projective spaces}
    Let $\CP(\w)$ be a weighted projective space. There exists an orbifold K\"{a}hler metric and a constant $c>0$ such that \eqref{eq: curvature condition} holds. 
\end{lemma}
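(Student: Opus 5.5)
Let $\w=(w_0,\dots,w_{n})$. I would realize $\CP(\w)$ as the quotient of the weighted sphere and reduce to a statement about the standard Fubini--Study metric, which satisfies \eqref{eq: curvature condition} with $c=1$ after normalization. Indeed, the Fubini--Study metric has constant holomorphic sectional curvature, so
\[
R^{FS}(\xi,\bar\xi,\zeta,\bar\zeta)=|\xi|^2|\zeta|^2+|g(\xi,\bar\zeta)|^2\ \ge\ |g(\xi,\bar\zeta)|^2 .
\]
The catch is that $\CP(\w)$ is only an orbifold quotient of $\CP^n$ via the branched map $\pi:[z_0:\cdots:z_n]\mapsto[z_0^{w_0}:\cdots:z_n^{w_n}]$. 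Pulling metrics back along $\pi$ does not produce orbifold metrics on $\CP(\w)$ directly, so I would use a different construction.

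\textbf{Step 1 (construction).} The cleanest route is the one already implicit in Theorem \ref{thm: He-Sun}. Take $\C^{n+1}$ with the flat metric and the weighted Reeb field
\[
\xi_\w=\sum_i w_i\left(x_i\partial_{y_i}-y_i\partial_{x_i}\right).
\]
Form the $\mathcal{D}$-homothetic, type I deformed cone whose Reeb field is $\xi_\w$; the Kähler cone potential is given by a radial function $r_\w$ homogeneous under the weighted $\C^*$-action. The transverse metric descends to an orbifold Kähler metric $\omega_\w$ on $\CP(\w)=(\C^{n+1}\setminus 0)/\C^*_\w$. An explicit choice is
\[
\omega_\w=\sqrt{-1}\partial\bar\partial\log\Big(\sum_i|z_i|^{2/w_i}\Big)
\]
(up to the constant $\prod w_i$), smoothed near the coordinate hyperplanes, or the Bryant/Abreu toric metric obtained from the symplectic potential $\sum_i \ell_i\log \ell_i$ on the simplex moment polytope. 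The latter is smooth in the orbifold sense automatically.

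\textbf{Step 2 (curvature).} Compactness of $\CP(\w)$ reduces \eqref{eq: curvature condition} to a pointwise strict statement: for every $\xi,\zeta\neq0$ with $g(\xi,\bar\zeta)\neq0$, we need $R(\xi,\bar\xi,\zeta,\bar\zeta)>0$. By homogeneity in $\xi$ and $\zeta$, together with compactness of the unit sphere bundle (in orbifold charts), a uniform constant $c>0$ then exists. A sufficient condition is positive bisectional curvature, since then
\[
R(\xi,\bar\xi,\zeta,\bar\zeta)\ \ge\ \delta\,|\xi|^2|\zeta|^2\ \ge\ \delta\,|g(\xi,\bar\zeta)|^2
\]
by Cauchy--Schwarz. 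So it suffices to produce an orbifold Kähler metric on $\CP(\w)$ with strictly positive bisectional curvature.

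\textbf{Step 3 (the main obstacle): existence of positive bisectional curvature.} I would first try to run Kähler--Ricci flow on the orbifold starting from any orbifold Kähler metric in $c_1$. Orbifold versions of Mok/Bando--Shi give that nonnegativity is preserved and becomes strictly positive when the orbifold is irreducible and not symmetric. This still requires an initial metric with nonnegative bisectional curvature. Failing that, I would do a direct toric computation with the Bryant-type metric, checking positivity on the open torus orbit and in each orbifold chart near the lower-dimensional strata. The cleanest option, if available, is to quote the existence part of He--Sun's work \cite{HS15}: weighted Sasaki spheres with positive transverse bisectional curvature exist. Choosing quasi-regular weights there gives precisely an orbifold metric on $\CP(\w)$ with positive bisectional curvature, and Step 2 finishes the proof.
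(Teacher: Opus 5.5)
Your final route is exactly the paper's proof: take an orbifold metric on $\CP(\w)$ with strictly positive bisectional curvature, supplied by He--Sun as the leaf space of a quasi-regular weighted Sasaki sphere, then use compactness and Cauchy--Schwarz to get $R(\xi,\bar\xi,\zeta,\bar\zeta)\ge\delta|\xi|^2|\zeta|^2\ge\delta|g(\xi,\bar\zeta)|^2$. The paper takes that metric to be the orbifold K\"{a}hler--Ricci soliton on $\CP(\w)$ and cites \cite{HS16} for its positivity, so your Step 1 and the flow/toric alternatives in Step 3 are unnecessary.

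One small caution: the opening claim of Step 2 is not justified. Strict positivity of $R$ wherever $g(\xi,\bar\zeta)\neq 0$ does not give a uniform $c$ by compactness alone, because that set is not closed. Your argument does not actually rely on this claim.
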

\begin{proof}
    It follows from \cite{HS16} that the orbifold K\"{a}hler-Ricci soliton metric $\omega$ on $\CP(\w)$ has positive bisectional curvature. Therefore,
    \[c=\inf_{|\xi|=1,|\zeta|=1}R(\xi,\Bar{\xi},\zeta,\Bar{\zeta})>0.\]
    It follows that for any unit holomorphic tangent vectors $\xi$ and $\zeta$, $R(\xi,\Bar{\xi},\zeta,\Bar{\zeta})\ge c|g(\xi,\Bar{\zeta})|^2$. 
\end{proof}

The condition \eqref{eq: curvature condition} also holds for irreducible compact Hermitian symmetric spaces. The lemma below provides a simple proof of this fact.

\begin{lemma}\label{lem: symmetric spaces}
    Let $(N,s)$ be an irreducible compact Hermitian symmetric space. Then there is a constant $c>0$ such that \eqref{eq: curvature condition} holds.
\end{lemma}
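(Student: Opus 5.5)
Since \eqref{eq: curvature condition} is homogeneous of degree $2$ in $\xi$ and in $\zeta$, it suffices to prove the inequality for unit vectors. The plan is to bound $R(\xi,\Bar{\xi},\zeta,\Bar{\zeta})$ from below by a positive multiple of $|g(\xi,\Bar\zeta)|^2$, using compactness together with a description of the pairs where the bisectional curvature vanishes. Note that we cannot simply copy the proof of Lemma \ref{lem: weighted projective spaces}: when $\rank N\ge 2$, the bisectional curvature of $s$ is only nonnegative, not positive. For instance, in the quadric $Q^n$ there are orthogonal pairs with $R(\xi,\Bar\xi,\zeta,\Bar\zeta)=0$. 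So the key point is that the zero set of $R$ only contains pairs with $g(\xi,\Bar\zeta)=0$.

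\textbf{Step 1 (reduction to one point).} Since $N$ is homogeneous under its isometry group, the problem reduces to a single point $o$, with $T_oN\cong\mathfrak p^{1,0}$. Consider the function
\[
F(\xi,\zeta)=\frac{R(\xi,\Bar\xi,\zeta,\Bar\zeta)}{|g(\xi,\Bar\zeta)|^2}
\]
on the set $U=\{(\xi,\zeta)\in S\times S:\ g(\xi,\Bar\zeta)\neq 0\}$, where $S$ is the unit sphere in $T_oN$. We want to show $F$ is bounded below by some $c>0$.

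\textbf{Step 2 (vanishing implies orthogonality).} The curvature of $s$ satisfies
\[
R(\xi,\Bar\xi,\zeta,\Bar\zeta)=|[\xi,\Bar\zeta]|^2\ge 0,
\]
where the bracket $[\xi,\Bar\zeta]\in\mathfrak k^{\C}$ comes from the Cartan decomposition (up to normalization), and the norm is taken with respect to $-B$, with $B$ the Killing form. Suppose $R(\xi,\Bar\xi,\zeta,\Bar\zeta)=0$. Then $[\xi,\Bar\zeta]=0$. Pairing with the element $Z$ generating the complex structure, which lies in the center of $\mathfrak k$, gives
\[
B([\xi,\Bar\zeta],Z)=B(\xi,[\Bar\zeta,Z]),
\]
and $[\Bar\zeta,Z]$ is a nonzero multiple of $\Bar\zeta$. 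This pairing is therefore proportional to $g(\xi,\Bar\zeta)$, so $g(\xi,\Bar\zeta)=0$. Thus the zero set of $R$ on $S\times S$ lies in the orthogonal locus. An alternative route: since the holomorphic sectional curvature is positive, the equality case in the second variation shows that a vanishing bisectional pair must be orthogonal.

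\textbf{Step 3 (compactness and the main obstacle).} A naive compactness argument does not immediately work, because near the orthogonal locus both numerator and denominator of $F$ tend to zero. The cleanest way around this is a direct quantitative bound: by Cauchy--Schwarz applied to the pairing with $Z$,
\[
|g(\xi,\Bar\zeta)| = C\,|B([\xi,\Bar\zeta],Z)| \le C\,|[\xi,\Bar\zeta]|\,|Z|.
\]
Squaring gives
\[
|g(\xi,\Bar\zeta)|^2\le C'\,R(\xi,\Bar\xi,\zeta,\Bar\zeta),
\]
so $c=1/C'$ works. This sidesteps the compactness issue entirely. The step I expect to need most care is fixing the normalizations: identifying $g$ with $-B$ restricted to $\mathfrak p$, and checking that $\operatorname{ad}Z$ acts by $\pm\sqrt{-1}$ on $\mathfrak p^{\pm}$. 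Irreducibility is used only to ensure that $g$ is a constant multiple of the Killing form.
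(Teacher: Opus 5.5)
Your argument is correct, and it takes a different and noticeably shorter route than the paper.

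The paper follows the Ohnita--Tojo computation. It moves $X$ by the isotropy group into the maximal abelian subspace $\mathfrak a$ spanned by $A_{\delta_1},\dots,A_{\delta_r}$ for a maximal set of strongly orthogonal roots. It splits $Y=Y_1+Y_2$ along $\mathfrak a\oplus J\mathfrak a$ and its complement, and discards the $Y_2$-term using that $R(X,JX)$ preserves this splitting and that bisectional curvature is nonnegative. It then evaluates $R(X,JX,Y_1,JY_1)=2l_1l_2\sum_j x_j^2(y_j^2+z_j^2)$ from root data and finishes with Cauchy--Schwarz over the $r$ strongly orthogonal directions, giving $c=2l_1/(rl_2)$.

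You instead use the identity $R(\xi,\Bar\xi,\zeta,\Bar\zeta)=\|[\xi,\Bar\zeta]\|^2$, which rests on $[\mathfrak p^+,\mathfrak p^+]=0$. You add one observation: the component of $[\xi,\Bar\zeta]\in\mathfrak k^{\C}$ along the real central element $Z$ is $B([\xi,\Bar\zeta],Z)=B(\xi,[\Bar\zeta,Z])=\pm\sqrt{-1}\,B(\xi,\Bar\zeta)$. Cauchy--Schwarz for the Hermitian form $-B(\cdot,\Bar{\cdot})$ on $\mathfrak k^{\C}$ then gives $|g(\xi,\Bar\zeta)|^2\le\|Z\|^2R(\xi,\Bar\xi,\zeta,\Bar\zeta)$ at once.

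What your route buys:
\begin{itemize}
\item It avoids strongly orthogonal roots, the conjugation of $X$ into $\mathfrak a$, and the invariance of $\mathfrak a\oplus J\mathfrak a$ under $R(X,JX)$.
\item It avoids the real-to-complex translation that the paper leaves to the reader.
\item It makes clear that the positive constant comes from the center of $\mathfrak k$, i.e.\ from the complex structure itself.
\end{itemize}

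What it costs is sharpness. With $g=-B|_{\mathfrak p}$ one has $\|Z\|^2=-\operatorname{tr}(\operatorname{ad}Z)^2=2\dim_{\C}N$, so you get $c=1/(2\dim_{\C}N)$. This depends on the dimension rather than the rank and is not optimal in general. For $\CP^n$ with $n\ge 2$, the optimal constant in the same normalization is $1/(n+1)$, attained at $\zeta=\xi$. The paper's estimate, by contrast, is attained in rank one. Since the lemma is only used to produce some $c>0$ before the $\mathcal{D}$-homothety with $a=2/c$, this difference is immaterial.

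Finally, your Step 2 and the compactness discussion are superfluous once Step 3 is in place. The ``alternative route'' via a second-variation equality case is not an actual argument as written, so you should delete it.
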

\begin{proof}
    The proof is based on a modification of the calculation in \cite{OT81} and we will use the notations in \cite{OT81}. Let $N=G/K_i$, $\rank N=r$ and $\{\delta_1,\cdots,\delta_r\}$ be a maximal set of strongly orthogonal roots in $\Delta_i$. Then the subspace $\Lie{a}\subset\Lie{m}_i$ spanned by $A_{\delta_1},\cdots,A_{\delta_r}$ is a maximal abelian subspace in $\Lie{m}_i$. Let $X,Y\in T_oN\cong\Lie{m}_i$ be any pair of real tangent vectors to $N$. As in \cite{OT81}, we may assume, without loss of generality, that $X\in\Lie{a}$ and 
   \[X=\sum_{j=1}^rx_jA_{\delta_j}.\]
   Moreover, let $Y=Y_1+Y_2$, where $Y_1\in\Lie{a}\oplus J\Lie{a}$ and $Y_2\in (\Lie{a}\oplus J\Lie{a})^\perp$ and suppose that
   \[Y_1=\sum_{j=1}^r\left(y_jA_{\delta_j}+z_jB_{\delta_j}\right).\]
   Since $R(X,JX)$ preserves both $\Lie{a}\oplus J\Lie{a}$ and $(\Lie{a}\oplus J \Lie{a})^\perp$ and $(N,s)$ has nonnegative bisectional curvature,
   \begin{equation}\label{eq: first inequality}
        \begin{aligned}
            R(X,JX,Y,JY)&=R(X,JX,Y_1,JY_1)+R(X,JX,Y_2,JY_2)\\
                    & \ge R(X,JX,Y_1,JY_1).
        \end{aligned}
    \end{equation}
   It remains to estimate $R(X,JX,Y_1,JY_1)$. By the assumptions on $X$ and $Y_1$,
    \begin{equation}
        \begin{aligned}
         &R(X,JX,Y_1,JY_1)\\
        =&s(-[[X,JX],Y_1],JY_1)\\
        =&s\left(-\left[\left[\sum_{j=1}^rx_jA_{\delta_j},\sum_{j=1}^rx_jB_{\delta_j}\right],Y_1\right],JY_1\right)\\
        =&\sum_{j=1}^rx_j^2s\left(\left[-2\sqrt{-1}H_{\delta_j},Y_1\right],JY_1\right)\\
        =&\sum_{j=1}^rx_j^2s\left(\left[-2\sqrt{-1}H_{\delta_j},\sum_{k=1}^r\left(y_kA_{\delta_k}+z_kB_{\delta_k}\right)\right],\sum_{k=1}^r\left(y_kB_{\delta_k}+z_kA_{\delta_k}\right)\right)\\
        =&\sum_{j=1}^rx_j^2s\left(2y_j(\delta_j,\delta_j)B_{\delta_j}+2z_j(\delta_j,\delta_j)A_{\delta_j},\sum_{k=1}^r\left(y_kB_{\delta_k}+z_kA_{\delta_k}\right)\right)\\
        =&\sum_{j=1}^r2x_j^2y_j^2(\delta_j,\delta_j)\|B_{\delta_j}\|^2_s+2x_j^2z_j^2(\delta_j,\delta_j)\|A_{\delta_j}\|_s^2.
       \end{aligned}
    \end{equation}
    By the property of strongly orthogonal roots, $(\delta_1,\delta_1)=\cdots=(\delta_r,\delta_r)=l_1$, where $l_1$ is a constant depending on the space $N$. Furthermore, $\|A_{\delta_1}\|_s^2=\cdots=\|A_{\delta_r}\|^2_s=\|B_{\delta_1}\|_s^2=\cdots=\|B_{\delta_r}\|_s^2=l_2$.
    Hence, 
    \begin{equation}\label{eq: second inequality}
        \begin{aligned}
         &R(X,JX,Y_1,JY_1)\\
        =&2l_1l_2\sum_{j=1}^rx_j^2y_j^2+x_j^2z_j^2\\
        \ge&\frac{2l_1}{rl_2}\left(\left(\sum_{j=1}^rx_jy_j\|A_{\delta_j}\|^2_s\right)^2+\left(\sum_{j=1}^rx_jz_j\|A_{\delta_j}\|^2_s\right)^2\right)\\
        =&\frac{2l_1}{rl_2}\left(s(X,Y)^2+s(X,JY)^2\right)
       \end{aligned}
    \end{equation}
    It is easy to see that this implies 
    \[R(\xi,\Bar{\xi},\zeta,\Bar{\zeta})\ge \frac{2l_1}{rl_2}|s(\xi,\Bar{\zeta})|^2.\]
\end{proof}
After suitably rescaling the metrics constructed in the above two lemmas and then taking their product, we may construct an orbifold metric $\omega^T$ on $X/\mathcal{F}_\xi$ in $\pi c^{orb}_1(L)$ with curvature condition \eqref{eq: curvature condition}.

Since $\omega^T \in \pi c^{orb}_1(L)$, there exists an orbifold Hermitian metric $h$ on $L^{-1}$ whose curvature form is $-2\sqrt{-1}\omega^T$. Let $r_h$ be the radial function defined by $h$. Following Example \ref{exm: quasi regular cones}, $\frac{\sqrt{-1}}{2}\partial\Bar{\partial}r^2_h$ is a K\"{a}hler cone metric whose transverse K\"{a}hler metric is $\omega^T$. Thus this K\"{a}hler cone metric satisfies \eqref{eq: curvature condition}.

Next, we apply a $\mathcal{D}$ homothety and consider the deformed K\"{a}hler cone metric
\[\omega = \frac{\sqrt{-1}}{2}\partial\Bar{\partial}r^{2a}_h, \quad a = \frac{2}{c},\]
whose transverse K\"{a}hler metric is $a\omega^T$. Then $\omega$ satisfies the curvature condition \eqref{eq: key curvature condition}. It follows from Corollary \ref{cor: reduction to kahler cone} that the associated Vaisman metric $g^V$ has nonnegative Chern bisectional curvature.

Finally, following the construction in \cite[\S 1]{OV03}, there is a cocompact, free, isometric $\Z$--action on the Vaisman manifold $(C(X), g^V)$. This shows that $C(X)$ covers a compact Vaisman manifold with nonnegative bisectional curvature.

\section{Compact l.c.K. manifolds with nonnegative bisectional curvature}\label{sec: lck manifolds}

In this section we study the more general case of compact l.c.K. manifolds with nonnegative bisectional curvature. 

\subsection{Proof of Theorem \ref{thm: lck manifolds with nonnegative bisectional curvature}}\label{subsec: proof}
Assume that $(M,J)$ does not admit a K\"{a}hler metric. We show that either (2) or (3) in Theorem \ref{thm: lck manifolds with nonnegative bisectional curvature} holds.

Because $(M,J)$ is non--Fano, \cite[Corollary 5.4]{Ust20} implies that the universal cover $\widetilde{M}$ admits an almost free, holomorphic and isometric action by a nontrivial simply connected complex Lie group $G$. Suppose that this action is given by the group homomorphism
\begin{equation}
    \rho: G \to \Aut(\widetilde{M})\cap\Iso(\widetilde{M},\widetilde{g}).
\end{equation}
Let $\Lie{g}$ be the Lie algebra of $G$, which is then identified with a Lie algebra of real holomorphic Killing vector fields. 

Since $(M,J,g)$ is l.c.K., there is a K\"{a}hler metric $\widetilde{g}_K$ on $\widetilde{M}$, such that 
\begin{equation}
    \widetilde{g}_K = e^f\widetilde{g}
\end{equation}
and $G$ acts conformally with respect to $\widetilde{g}_K$. Because $\widetilde{g}_K$ is K\"{a}hler, $G$ acts by homotheties with respect to $\widetilde{g}_K$. Therefore, there is a group homomorphism $\phi: G\to \R_+$, such that 
\begin{equation}
    \rho(x)^*\widetilde{g}_K = \phi(x)\widetilde{g}_K.
\end{equation}
We distinguish two cases according to whether $\phi$ is trivial. 

\subsubsection{$\phi$ is nontrivial}\label{subsubsection: non-trivial}

In this case, we prove alternative (2) in Theorem \ref{thm: lck manifolds with nonnegative bisectional curvature}. First, because of the nontriviality of $\phi$, there are $\xi\in\Lie{g}, E=-J\xi\in\Lie{g}$ such that 
\begin{equation}\label{eq: Lie derivatives}
    \mathcal{L}_\xi\widetilde{g}_K = 0, \quad \mathcal{L}_E\widetilde{g}_K = 2\widetilde{g}_K.
\end{equation}
The following lemma constructs a K\"{a}hler cone structure on $(\widetilde{M},\widetilde{g}_K)$ by these vector fields.
\begin{lemma}
    $(\widetilde{M},\widetilde{g}_K)$ is biholomorphic and isometric to a K\"{a}hler cone $(\R_+\times S, dr^2+r^2g_S)$ and the associated Reeb field is $\xi$.
\end{lemma}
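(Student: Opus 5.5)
Let $E=-J\xi$ with $\mathcal{L}_E\widetilde{g}_K=2\widetilde{g}_K$ and $\mathcal{L}_\xi\widetilde{g}_K=0$, as in \eqref{eq: Lie derivatives}. The plan is to show that $E$ is a gradient homothetic field, build the radial function $r$ from it, and then use the standard characterization of metric cones.

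First, $E$ is real holomorphic because $\xi\in\Lie{g}$ is real holomorphic. Second, $\xi=JE$ is Killing for $\widetilde{g}_K$. From these two facts I claim that $\nabla^K E=\mathrm{Id}$, where $\nabla^K$ is the Levi-Civita connection of $\widetilde{g}_K$. The argument is as follows. The symmetric part of $\nabla^K E$ is $\frac12\mathcal{L}_E\widetilde{g}_K=\widetilde{g}_K$. Since $\widetilde{g}_K$ is Kähler, holomorphy of $E$ means $\nabla^K E$ commutes with $J$. Hence $\nabla^K\xi=J\nabla^KE$. The Killing condition says $\nabla^K\xi$ is skew, i.e. $J\nabla^K E$ is skew. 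Write $\nabla^KE=\mathrm{Id}+A$ with $A$ skew and $[A,J]=0$. Then $J+JA$ is skew, and since $J$ is skew this forces $JA$ to be skew. But $JA$ is also symmetric, because $(JA)^T=A^TJ^T=AJ=JA$. So $JA=0$, hence $A=0$.

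Consequently $E^\flat$ is closed, so on the simply connected $\widetilde{M}$ we can write $E=\nabla^K(r^2/2)$. Then $|E|^2=r^2$ up to a constant, which I normalize by the choice of primitive using $E(|E|^2)=2|E|^2$. We also get $\mathrm{Hess}^K(r^2/2)=\widetilde{g}_K$.

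The main obstacle is globalizing this into $\widetilde{M}\cong\R_+\times S$ isometrically. I need three facts: $E$ vanishes nowhere, $E$ is complete, and the flow of $E$ has no periodicity issues. Nonvanishing holds because the action is almost free and $\phi$ is nontrivial along $E$; alternatively, a zero of $E$ would be an isolated fixed point of the homotheties, and that is incompatible with the cocompact deck action by homotheties/isometries on a compact quotient. Completeness of $E$ follows from its being induced by the group action $\rho$. Given these, set $S=\{r=1\}$, which is a smooth hypersurface since $dr\neq0$. The map $\R_+\times S\to\widetilde{M}$, $(t,s)\mapsto \Phi^E_{\log t}(s)$, is a diffeomorphism, because $E(r)=r$ makes $\log r$ increase at unit speed along the flow. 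The pullback metric is $dr^2+r^2g_S$: the radial direction is orthogonal to the level sets, and the flow of $E$ scales $\widetilde{g}_K$ by $t^2$. This is the standard characterization of cones via $\nabla^K E=\mathrm{Id}$.

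Finally, the Kähler cone axioms hold. $J$ is integrable and $\widetilde{g}_K$ is Kähler by assumption. The field $r\partial_r=E$ is real holomorphic, so $\mathcal{L}_{r\partial_r}J=0$. The Reeb field is $J(r\partial_r)=JE=J(-J\xi)=\xi$. This gives the lemma.
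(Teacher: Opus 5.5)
Your proof is correct and follows essentially the same route as the paper. You derive $\nabla E=\mathrm{Id}$ from the Killing property of $\xi=JE$ together with $\mathcal{L}_E\widetilde{g}_K=2\widetilde{g}_K$; your $\mathrm{Id}+A$ argument is just a linear-algebra repackaging of the paper's computation. You then use $\frac12|E|^2$ as the radial potential and the flow of $E$ to identify $\widetilde{M}$ with $\R_+\times S$. The only differences are cosmetic: invoking simple connectedness to find a primitive of $E^\flat$ is unnecessary, since $\nabla(\frac12|E|^2)=E$ holds directly, and you justify nonvanishing, completeness and bijectivity of the flow map more explicitly than the paper does.
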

\begin{proof}
    Because $\mathcal{L}_\xi\widetilde{g}_K=0$, for any vector fields $X, Y$,
    \begin{align*}
        0 = & \: (\mathcal{L}_\xi\widetilde{g}_K)(X,Y)\\
          = & \: \xi \widetilde{g}_K(X,Y)-\widetilde{g}_K([\xi,X],Y)-\widetilde{g}_K(X,[\xi,Y])\\
          = & \: \widetilde{g}_K(\nabla_\xi X, Y)+\widetilde{g}_K(X,\nabla_\xi Y)-\widetilde{g}_K([\xi,X],Y)-\widetilde{g}_K(X,[\xi,Y])\\
          = & \: \widetilde{g}_K(\nabla_X\xi,Y)+\widetilde{g}_K(X,\nabla_Y\xi)\\
          = & \: \widetilde{g}_K(J\nabla_XE,Y)+\widetilde{g}_K(X,J\nabla_YE)\\
          = & \: -\widetilde{g}_K(\nabla_XE,JY)+\widetilde{g}_K(X,\nabla_{JY}E)
    \end{align*}
    By replacing $Y$ with $-JY$, it follows that
    \begin{equation*}
        \widetilde{g}_K(\nabla_XE,Y) = \widetilde{g}_K(X,\nabla_YE).
    \end{equation*}
    A similar computation based on $\mathcal{L}_{E}\widetilde{g}_K=2\widetilde{g}_K$ shows that
    \begin{equation*}
        \widetilde{g}_K(\nabla_XE,Y)+\widetilde{g}_K(X,\nabla_YE)=2\widetilde{g}_K(X,Y).
    \end{equation*}
    Hence 
    \begin{equation}\label{eq:derivative of E}
        \nabla E=\text{Id}.
    \end{equation}
    
    We now construct an explicit Kähler cone biholomorphically isometric to $(\widetilde{M},\widetilde{g}_K)$. Define $F=\frac{1}{2}\widetilde{g}_K(E,E)$. Then $\nabla F=E$ by \eqref{eq:derivative of E}. Because $E$ is a complete vector field and nowhere vanishes, $S=F^{-1}(\frac{1}{2})\subseteq\widetilde{M}$ is a smooth hypersurface. Suppose that $\Psi:\R\times S\to \widetilde{M}$ is the flow generated by $E$. Define a diffeomorphism $\Phi:\R_+\times S\to \widetilde{M}$ as follows:
    \begin{equation}
        \quad \Phi(r,s) = \Psi(\log r, s).
    \end{equation}
    It now remains to check that $(\R_+\times S,\Phi^*J,\Phi^*\widetilde{g}_K)$ is a K\"{a}hler cone. First, it is clearly K\"{a}hler. Moreover, it follows from \eqref{eq:derivative of E} that $\Phi^*\widetilde{g}_K = dr^2+r^2g_S$, where $g_S$ is the restriction of $\widetilde{g}_K$ to $S$.  Finally, $r\frac{\partial}{\partial r}=\Phi^*E$ is real holomorphic. 
\end{proof}

Note that $\Gamma := \pi_1(M)$ acts on this K\"{a}hler cone conformally. Therefore, by Theorem \ref{thm: complete Kahler cone}, the Sasaki link $(S,g_S)$ is complete.   

The completeness implies that $(\widetilde{M}, \widetilde{g}_K)$ has one-point completion $(\widetilde{M}\cup\{o\}, d)$, where $o$ is the cone tip. Let $\chi:\Gamma\to\R_+$ be the homothety character, i.e. $\gamma^*\widetilde{g}_K=\chi(\gamma)\widetilde{g}_K, \forall \gamma\in\Gamma$. Then $r(\gamma\cdot x)^2=d(\gamma\cdot x, o)^2=\chi(\gamma)d(x,o)^2=\chi(\gamma)r(x)^2$. Consequently,
\[\gamma^*g^V=\frac{\gamma^*\widetilde{g}_K}{\gamma^*r^2}=\frac{\widetilde{g}_K}{r^2}=g^V,\quad \forall\gamma\in\Gamma.\]
Hence the Vaisman metric $g^V$ descends to $M = M/\Gamma$, thereby proving alternative (2) of Theorem \ref{thm: lck manifolds with nonnegative bisectional curvature}.

\subsubsection{$\phi$ is trivial} We will prove that (3) in Theorem \ref{thm: lck manifolds with nonnegative bisectional curvature} holds.

\begin{lemma}
    The nontrivial complex Lie group $G$ is the abelian group $\C^k$.
\end{lemma}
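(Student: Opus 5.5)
We work in the case where $\phi$ is trivial, so every element of $G$ acts on $(\widetilde{M},\widetilde{g}_K)$ by holomorphic isometries. The plan is to show that the Lie algebra $\Lie{g}$, viewed as real holomorphic vector fields, consists of \emph{parallel} vector fields for the Kähler metric $\widetilde{g}_K$. It then follows that $\Lie{g}$ is abelian, and since $G$ is simply connected, $G\cong\C^k$.

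\textbf{Step 1 (each field is Killing and holomorphic).} Take $X\in\Lie{g}$. Because $\Lie{g}$ is a complex Lie algebra acting holomorphically, $JX\in\Lie{g}$ as well. Since $\phi$ is trivial, both $X$ and $JX$ are Killing for $\widetilde{g}_K$.

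\textbf{Step 2 (each field is parallel).} On a Kähler manifold, $X$ is real holomorphic exactly when $\nabla X$ commutes with $J$. Since $J$ is parallel, $\nabla(JX)=J\nabla X$. The Killing condition for $JX$ says $J\nabla X$ is skew-adjoint. Because $J$ is skew-adjoint and commutes with $\nabla X$, we have $(J\nabla X)^*=(\nabla X)^*J^*=-(\nabla X)^*J$. Since $\nabla X$ is also skew-adjoint, this equals $J\nabla X$. Thus $J\nabla X$ is both self-adjoint and skew-adjoint, so it vanishes, and hence $\nabla X=0$. This is the same computation as in the proof of the previous lemma, with the homothety factor $2$ replaced by $0$. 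The outcome is that every $X\in\Lie{g}$ is $\widetilde{g}_K$-parallel.

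\textbf{Step 3 (the algebra is abelian).} For parallel fields $X,Y$ with torsion-free $\nabla$, we get $[X,Y]=\nabla_XY-\nabla_YX=0$. So $\Lie{g}$ is abelian, of some complex dimension $k>0$ since $G$ is nontrivial. A simply connected abelian complex Lie group is its Lie algebra under the exponential map, so $G\cong\C^k$.

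\textbf{Main obstacle.} The delicate point is Step 2, namely justifying that $\Lie{g}$ is closed under $J$. This holds because $G$ is a complex Lie group acting holomorphically, so the infinitesimal action is complex linear. We also need that the action really is by $\widetilde{g}_K$-isometries, which uses $\phi\equiv 1$ on the connected group $G$. The rest is the formal linear algebra above.
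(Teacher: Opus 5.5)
Your proof is correct, but it argues differently from the paper.

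\textbf{The paper's route.} The paper works on the group side. Since $\phi$ is trivial, the conformal factor $f$ is $G$-invariant and hence constant on each orbit. So $\widetilde g$ restricted to an orbit is a constant multiple of $\widetilde g_K$ restricted to a complex submanifold, and is therefore Kähler. Pulling this back by the orbit map, which is an immersion by almost freeness, gives a left-invariant Kähler metric on $G$. The paper then appeals to the standard fact that a complex Lie group carrying a left-invariant Kähler metric is abelian. (The $(2,1)$-part of $d\omega$ involves $d\alpha^i$ for the left-invariant holomorphic $1$-forms $\alpha^i$, and its vanishing forces all structure constants to vanish.)

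\textbf{Your route.} You stay on $\widetilde M$ and run the same linear algebra that produced $\nabla E=\mathrm{Id}$ in the preceding lemma, now with homothety factor $0$. This shows every field in $\Lie{g}$ is $\widetilde g_K$-parallel, so the fields commute by torsion-freeness.

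\textbf{What your route buys.} It avoids the Lie-theoretic fact about Kähler Lie groups. It also gives strictly more information: since $V$ is spanned by parallel fields, $V$ and $H=V^\perp$ are parallel distributions, and the next two lemmas become nearly immediate.
\begin{itemize}
\item For $\eta_1,\eta_2\in\Gamma(H)$ and $X\in\Lie{g}$ one has $\widetilde g_K([\eta_1,\eta_2],X)=\widetilde g_K(\eta_1,\nabla_{\eta_2}X)-\widetilde g_K(\eta_2,\nabla_{\eta_1}X)=0$, which is the integrability of $H$.
\item The inner products $\widetilde g_K(X,Y)$ of fields in $\Lie{g}$ are constant. This is exactly the constancy of $H_{\alpha\bar\beta}$ in the splitting lemma.
\end{itemize}

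\textbf{One point to make explicit.} Passing from ``the induced vector fields commute'' to ``$\Lie{g}$ is abelian'' needs injectivity of $\Lie{g}\to\Gamma(T\widetilde M)$, which comes from almost freeness. The paper relies on the same hypothesis, both to identify $\Lie{g}$ with vector fields and to make the orbit map an immersion.
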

\begin{proof}
    Since $\phi$ is trivial, $f$ is $G$--invariant. It follows that $f$ is constant on the orbits of $G$. This shows that restrictions of $\widetilde{g}$ on the orbits are K\"{a}hler. Hence $G$ admits left invariant K\"{a}hler metrics and therefore is isomorphic to $\C^k, k>0$.
\end{proof}

Let $V$ be the distribution of tangent spaces to the orbits and $H=V^\perp$ be its orthogonal complement.
\begin{lemma}
    $H$ is an integrable distribution.
\end{lemma}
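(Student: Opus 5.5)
The plan is to work with the conformal Kähler metric $\widetilde{g}_K=e^f\widetilde{g}$ instead of $\widetilde{g}$. Orthogonality is a conformally invariant notion, so $H=V^\perp$ is the same distribution for $\widetilde{g}$ and for $\widetilde{g}_K$. Since $\phi$ is trivial, $G\cong\C^k$ acts by holomorphic isometries of $\widetilde{g}_K$. Hence every element of $\Lie{g}$ is a real holomorphic Killing field for the Kähler metric $\widetilde{g}_K$. Because $\Lie{g}$ is the Lie algebra of a complex Lie group acting holomorphically, it is closed under $J$: if $X\in\Lie{g}$, then $JX\in\Lie{g}$ is also Killing.

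The key step is to show that every $X\in\Lie{g}$ is parallel for the Levi-Civita connection $\nabla$ of $\widetilde{g}_K$. Since $X$ is Killing, the endomorphism $A=\nabla X$ is skew-adjoint. Since $X$ is real holomorphic and $\nabla J=0$, $A$ commutes with $J$. Since $JX$ is Killing and $\nabla(JX)=JA$, the endomorphism $JA$ is also skew-adjoint. On the other hand, taking adjoints,
\[
(JA)^{*}=A^{*}J^{*}=(-A)(-J)=AJ=JA,
\]
so $JA$ is self-adjoint. Being both skew- and self-adjoint, $JA=0$, hence $A=\nabla X=0$.

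Next I would check that $V$ is a genuine smooth distribution of constant rank. The action is almost free, so stabilizers are discrete. The map $\Lie{g}\to T_p\widetilde{M}$ is therefore injective at every point, and $V$ is spanned pointwise by a fixed basis $X_1,\dots,X_{2k}$ of $\Lie{g}$, giving constant rank $2k$. (This also follows from parallelism, since parallel fields are either identically zero or nowhere zero, and linear relations among them persist.) With this in hand, $V$ is a parallel distribution, and so is its orthogonal complement $H$.

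Integrability then follows directly. For sections $Y,Z$ of $H$ and any basis element $X_a$,
\[
\widetilde{g}_K([Y,Z],X_a)=\widetilde{g}_K(\nabla_YZ,X_a)-\widetilde{g}_K(\nabla_ZY,X_a)=-\widetilde{g}_K(Z,\nabla_YX_a)+\widetilde{g}_K(Y,\nabla_ZX_a)=0.
\]
So $[Y,Z]\in H$, and by Frobenius $H$ is integrable.

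I expect the only delicate point to be the parallelism of $X$. It depends on $\Lie{g}$ being $J$-invariant, that is, on $G$ being a complex Lie group acting holomorphically, together with the triviality of $\phi$, so that the fields are Killing for the Kähler metric rather than merely for $\widetilde{g}$. The rest is routine. As a by-product, this argument also sets up the de Rham splitting needed later for alternative (3).
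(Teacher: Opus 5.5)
Your proof is correct, but it follows a different route from the paper. The paper never uses the Levi-Civita connection. It splits the fundamental form of $\widetilde g$ as $\widetilde\omega=\widetilde\omega^H+\widetilde\omega^V$ and uses closedness of $e^f\widetilde\omega$, together with $G$-invariance of $f$ and of $\widetilde\omega^H$ (so that $\iota_\xi d\widetilde\omega^H=0$). From this it gets $d\widetilde\omega^V(\eta_1,\eta_2,\xi)=0$ for $\eta_1,\eta_2\in\Gamma(H)$ and $\xi\in\Lie{g}$, and expanding $d$ reduces this to $\widetilde\omega^V(\xi,[\eta_1,\eta_2])=0$. $J$-invariance of $V$ then gives $[\eta_1,\eta_2]\in H$.

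You instead prove the stronger statement that every $X\in\Lie{g}$ is $\widetilde g_K$-parallel: since both $X$ and $JX$ are Killing, $\nabla X$ is skew-adjoint while $J\nabla X$ is both skew- and self-adjoint, forcing $\nabla X=0$. Hence $V$ and $H$ are parallel, and $H$ is integrable with totally geodesic leaves.

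Both arguments use the same inputs: triviality of $\phi$ (equivalently, $G$-invariance of $f$) and $J$-invariance of $\Lie{g}$. Your version extracts more from them.
\begin{itemize}
\item Parallelism gives $[X,Y]=\nabla_XY-\nabla_YX=0$, which reproves the preceding lemma that $G$ is abelian.
\item It gives a parallel orthogonal splitting $T\widetilde M=H\oplus V$. This makes the later lemma $\widetilde g_K=g_L+g_0$ nearly immediate: the $\C^k$-block consists of inner products of parallel fields, hence is constant, and cross terms vanish because $H\perp V$. The paper needs a separate closedness argument there.
\end{itemize}
The paper's computation is more elementary, staying at the level of differential forms and first derivatives, but it delivers only integrability.
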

\begin{proof}
    Let $\widetilde{\omega}$ be the fundamental form of $\widetilde{g}$ and it has the following decomposition 
    \begin{equation}
        \widetilde{\omega} = \widetilde{\omega}^H+\widetilde{\omega}^V
    \end{equation}
    with respect to the decomposition $T\widetilde{M}=H\oplus V$.
    $e^f\widetilde{\omega}$ is a K\"{a}hler form and hence
    \begin{equation}\label{eq: Kahler form}
        0 = d(e^f\widetilde{\omega}) = d(e^f)\wedge\widetilde{\omega}^H + d(e^f)\wedge\widetilde{\omega}^V + e^f d\widetilde{\omega}^H + e^f d\widetilde{\omega}^V.
    \end{equation}
    Let $\eta_1, \eta_2$ be smooth sections of $H$ and $\xi\in\Lie{g}$. Since $f$ is $G$ invariant, $d(e^f)(\xi)=0$. Furthermore, since $G$ preserves $\widetilde{\omega}^H$, $\mathcal{L}_\xi\widetilde{\omega}^H=0$. As a consequence, $d\widetilde{\omega}^H(\xi,\cdot,\cdot)=0$. Therefore, \eqref{eq: Kahler form} shows that
    \begin{equation}
        \begin{aligned}
            0 =& \: d\widetilde{\omega}^V(\eta_1,\eta_2,\xi)\\
              =& \: \eta_1\widetilde{\omega}^V(\eta_2,\xi) + \eta_2\widetilde{\omega}^V(\xi,\eta_1) + \xi\widetilde{\omega}^V(\eta_1,\eta_2)\\ 
                & - \widetilde{\omega}^V([\eta_1,\eta_2],\xi) - \widetilde{\omega}^V([\eta_2,\xi],\eta_1) - \widetilde{\omega}^V([\xi,\eta_1],\eta_2)\\
              = & \: \widetilde{\omega}^V(\xi,[\eta_1,\eta_2]).
        \end{aligned}
    \end{equation}
    Because the vector fields in $\Lie{g}$ span $V$ at every point, we deduce that $[\eta_1,\eta_2]$ is a smooth section of $H$. By the Frobenius theorem, $H$ is an integrable distribution.
\end{proof}

Let $L$ be a leaf of $H$, which is a complex submanifold. Define the holomorphic map
\begin{equation}
    F: L\times\C^k \to \widetilde{M}, \quad F(x,z) = z\cdot x.
\end{equation}

\begin{lemma}
    $F$ is a biholomorphism.
\end{lemma}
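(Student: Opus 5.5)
We prove that $F$ is a local biholomorphism, then a covering map, and finally conclude from simple connectivity of $\widetilde{M}$. Holomorphicity is clear: the action of $G=\C^k$ is holomorphic and $L$ is a complex submanifold.

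\emph{Local biholomorphism.} At $(x,z)$ the differential of $F$ sends $T_xL\oplus\C^k$ to $d\rho(z)(H_x)\oplus V_{z\cdot x}$. The action is almost free, so the orbit map $\C^k\to \widetilde{M}$, $z\mapsto z\cdot x$, is an immersion with image tangent to $V$. Each $\rho(z)$ is an isometry of $\widetilde{g}$ that preserves the orbits, so it preserves $V$ and hence $H=V^\perp$. Consequently $d\rho(z)(T_xL)=H_{z\cdot x}$. Since $T\widetilde{M}=H\oplus V$, the differential $dF$ is an isomorphism, and $F$ is a local biholomorphism. The same argument shows that $\rho(z)$ permutes the leaves of $H$, so $\rho(z)(L)$ is again a leaf.

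\emph{Covering property.} I will equip $L\times\C^k$ with the metric $F^*\widetilde{g}$. By the $G$-invariance of $V$ and $H$, this metric is the orthogonal sum of the leafwise metric on $L$ (transported by $\rho(z)$) and, on the $\C^k$-directions, a metric which, since $\phi$ is trivial and $f$ is constant on orbits, is a left-invariant hence flat metric depending on $x$. The aim is to show that $F$ is a local isometry from a complete space, which would make it a covering. Completeness is the delicate part. The plan is to argue that $F$ has the path-lifting property directly. Given a path $\gamma(t)$ in $\widetilde{M}$ starting at $F(x_0,z_0)$, decompose $\dot\gamma$ into $H$- and $V$-components. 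Then solve for $z(t)$ from the $V$-component, which is a linear ODE on the abelian group with coefficients bounded in terms of $|\dot\gamma|$, because the orbit map has uniformly controlled differential; this uses cocompactness, since $\Gamma$ commutes with or normalizes the action. Next lift the $H$-component to the leaf $L$ via $\rho(z(t))^{-1}$. The resulting horizontal path in $L$ has finite length, so it does not escape $L$, provided that $L$ is, say, a closed leaf.

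\emph{Main obstacle.} The obstacle I expect is exactly this completeness and properness issue. A leaf $L$ might accumulate on itself, and the infinitesimal orbit metric might degenerate, since it is only almost free. What I would try first is to use that $\widetilde{g}$ is the pullback of a metric on compact $M$ and hence complete. Moreover, $H$ is totally geodesic-like in the following sense: a horizontal geodesic of $\widetilde{g}$ stays horizontal because the $V$-directions are spanned by Killing fields $X$ with $g(X,\dot\gamma)$ conserved along geodesics. Hence $L$ with the induced metric is complete, and horizontal lifts exist for all time, giving the lifting property.

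\emph{Conclusion.} $F$ is then a holomorphic covering onto the connected manifold $\widetilde{M}$. Since $\widetilde{M}$ is simply connected and $L\times\C^k$ is connected, $F$ is a biholomorphism. As a byproduct, each orbit meets $L$ exactly once and the action is free.
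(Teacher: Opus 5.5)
Your proof is correct, but it takes a different route from the paper.

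The paper's argument is purely topological. The saturations $\C^k\cdot L'$ of the leaves of $H$ are open and are pairwise equal or disjoint, so connectedness gives surjectivity. If $z\cdot x=z'\cdot x'$ with $x,x'\in L$, then $z-z'$ lies in the leaf stabilizer $\Gamma_L=\{z:z\cdot L=L\}$. So the fibers of $F$ are exactly the orbits of the free action $z\cdot(x,w)=(z\cdot x,w-z)$ of $\Gamma_L$ on $L\times\C^k$. Hence $F$ descends to a biholomorphism $(L\times\C^k)/\Gamma_L\to\widetilde M$, and simple connectivity forces $\Gamma_L=1$. No completeness of any metric is needed. The paper does, however, tacitly use your first step (that $F$ is a local biholomorphism). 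That is what makes the saturations open, and it also makes the $\Gamma_L$-action a covering-space action: a neighborhood on which $F$ is injective is disjoint from all its nontrivial $\Gamma_L$-translates.

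You instead obtain the covering property metrically. You use completeness of $\widetilde g$ and the fact that leaves of $H$ are totally geodesic, since $\widetilde g(X,\dot\sigma)$ is conserved along geodesics for the Killing fields $X\in\Lie{g}$. This is longer, but it gives extra information. The leaf $(L,\widetilde g|_L)$ is complete, and $F^*\widetilde g=\widetilde g|_L\oplus h_x$, with $h_x$ a translation-invariant metric on $\C^k$ depending smoothly on $x$, is a complete metric. So $F$ is a Riemannian covering.

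Two simplifications to your write-up:

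The appeal to cocompactness and to $\Gamma$ normalizing the action is unjustified and unnecessary. Let $A_p\colon\C^k\to V_p$ be the infinitesimal action, which is injective at every point because stabilizers are discrete. Then $\dot z(t)=A_{\gamma(t)}^{-1}(\dot\gamma(t)^V)$ does not involve $z$, and it is bounded on compact time intervals by continuity.

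Your ``main obstacle'' disappears if you set $x(t)=(-z(t))\cdot\gamma(t)$ directly. Invariance of the fundamental vector fields under the abelian action gives $\dot x=d\rho(-z(t))\,\dot\gamma^H\in H$. So $x(t)$ stays in $L$ and $F(x(t),z(t))=\gamma(t)$, with no appeal to completeness of $L$.
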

\begin{proof}
    For any leaf $L'$ of the distribution $H$, let $S(L')=\C^k\cdot L'$ be its saturation under the action of $\C^k$. It is clear that the saturation of any leaf is open and saturations of different leaves are either the same or disjoint. Therefore, by the connectedness of $\widetilde{M}$, $S(L)=\widetilde{M}$, i.e. $F$ is surjective.

    Let 
    \[\Gamma_L=\{z\in\C^k:z\cdot L=L\}\leq \C^k\]
    and it acts freely discretely on $L\times\C^k$ by
    \[z\cdot(x,w)=(z\cdot x, w-z), \quad \forall z\in\Gamma_L.\]
    The preimages of points in $\widetilde{M}$ are orbits of $\Gamma_L$--action. Therefore, $F$ induces a biholomorphism from $(L\times\C^k)/\Gamma_L$ to $\widetilde{M}$. Because $\widetilde{M}$ is simply connected, we deduce that $\Gamma_L=1$ and $F$ is a biholomorphism.
\end{proof}

By the above lemma, $\widetilde{g}_K$ is regarded as a K\"{a}hler metric on $L\times\C^k$. The next lemma shows that it splits.
\begin{lemma}
    $\widetilde{g}_K=g_L+g_0$, where $g_L$ is a K\"{a}hler metric on $L$ and $g_0$ is the Euclidean metric on $\C^k$.
\end{lemma}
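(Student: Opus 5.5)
Under the biholomorphism $F$, the $\C^k$-action becomes translation in the second factor, $z\cdot(x,w)=(x,w+z)$. Since $\phi$ is trivial, this action is by holomorphic isometries of $\widetilde{g}_K$ (and also of $\widetilde{g}$). Moreover, the slices $L\times\{w\}=F^{-1}(w\cdot L)$ are leaves of $H$. Because the action preserves $\widetilde{g}$, it also preserves $V$ and $H=V^\perp$. Since $\widetilde{g}_K=e^f\widetilde{g}$ is conformal to $\widetilde{g}$, orthogonality is the same for both metrics. So, for $\widetilde{g}_K$, the slices $L\times\{w\}$ are orthogonal to the slices $\{x\}\times\C^k$ (the orbits).

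The plan is then to show that each factor is totally geodesic, so that a de Rham type splitting applies. Write $\widetilde{g}_K=g^H+g^V$ with respect to the orthogonal decomposition $T(L\times\C^k)=TL\oplus T\C^k$, which has no cross terms.

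First, translation invariance means the coefficients of $\widetilde{g}_K$ do not depend on $w$. Hence $g^H$ restricted to $L\times\{w\}$ is a fixed metric $g_L$ on $L$, and it is K\"ahler because $L$ is a complex submanifold of a K\"ahler manifold.

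Second, on each orbit $\{x\}\times\C^k$ the metric $g^V$ is translation invariant, so it is a constant Hermitian form $h_x$ on $\C^k$, flat in particular. What remains is to show that $h_x$ does not depend on $x$.

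For this, use the K\"ahler condition. Write $\widetilde{\omega}_K=\omega_L+\sum h_{a\bar b}(x)\,\sqrt{-1}\,dw^a\wedge d\bar w^b$, with $\omega_L$ depending only on $x$. Then $d\widetilde{\omega}_K=0$ gives $d_x h_{a\bar b}(x)\wedge dw^a\wedge d\bar w^b=0$. Because the forms $dx^i\wedge dw^a\wedge d\bar w^b$ are linearly independent, every $d_x h_{a\bar b}$ vanishes. Since $L$ is connected, $h$ is constant. A linear change of coordinates on $\C^k$, which commutes with the translation action, then makes $h$ the Euclidean metric $g_0$.

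The step I expect to need the most care is justifying that $\widetilde{g}_K$ has no mixed $dx\otimes dw$ terms and that $\omega_L$ has no $w$-dependence. The first needs $V$ and $H$ to be orthogonal for $\widetilde{g}_K$ and not just for $\widetilde{g}$; this follows from conformality. The second needs the identification of $L\times\{w\}$ with translated leaves; this follows from the invariance of $H$ under the isometric action. Together these give $\widetilde{g}_K=g_L+g_0$.
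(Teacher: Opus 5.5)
Your proposal is correct and follows essentially the same route as the paper: orthogonality of the two factors removes the mixed terms, $\C^k$-invariance removes the $w$-dependence of the coefficients, and $d\widetilde{\omega}_K=0$ forces the $dw^\alpha\wedge d\bar{w}^\beta$ block to be constant. You also spell out details the paper leaves implicit: that orthogonality passes from $\widetilde{g}$ to $\widetilde{g}_K$ by conformality, that the slices $L\times\{w\}$ are leaves of $H$ because the isometric action preserves $H$, and the final linear change of coordinates on $\C^k$.
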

\begin{proof}
    Let $z=(z^1,\cdots,z^l)$ be local coordinates on $L$ and $w=(w^1,\cdots,w^k)$ be standard coordinates on $\C^k$. Since the two factors are orthogonal, we have the following local expression of the K\"{a}hler form
    \[\widetilde{\omega}_K=\sqrt{-1}\left(g_L(z,\Bar{z},w,\Bar{w})_{i\Bar{j}}dz^i\wedge d\Bar{z}^j+H(z,\Bar{z},w,\Bar{w})_{\alpha\Bar{\beta}}dw^\alpha\wedge d\Bar{w}^\beta\right).\]
    Since $\widetilde{\omega}_K$ is invariant under the action of $\C^k$, $g_L(z,\Bar{z},w,\Bar{w})_{i\Bar{j}}=g_L(z,\Bar{z})_{i\Bar{j}}$ and $H(z,\Bar{z},w,\Bar{w})_{\alpha\Bar{\beta}}=H(z,\Bar{z})_{\alpha\Bar{\beta}}$. Finally, using $d\widetilde{\omega}_K=0$, we get $dH_{\alpha\Bar{\beta}}=0$ and thus $H_{\alpha\Bar{\beta}}$ are constants. The result then follows. 
\end{proof}

We have now shown that $(\widetilde{M},\widetilde{g}_K)\cong(L,g_L)\times(\C^k,g_0)$. Finally, we observe that $g_L$ has to be incomplete. Indeed, if it were complete, then $(\widetilde{M},\widetilde{g}_K)$ would also be complete. Since $M$ does not admit any K\"{a}hler metric, there exists $\gamma\in\pi_1(M)$ such that $\gamma^*\widetilde{g}_K=c\widetilde{g}_K, c < 1$, namely, $\gamma$ is a contraction. Since contractions on complete metric spaces have fixed points, $\gamma$ fixes some point of $\widetilde{M}$. This contradicts the fact that $\Gamma$ acts freely on $\widetilde{M}$.

\subsection{Applications}\label{subsec:applications}

In this subsection we give applications of Theorem \ref{thm: lck manifolds with nonnegative bisectional curvature}. 

\begin{corollary}\label{cor: lck manifolds with pi1=Z}
    Suppose that $M$ is a compact complex manifold and $\pi_1(M)=\Z$. If $M$ admits an l.c.K. metric with nonnegative bisectional curvature, then exactly one of the following holds:
    \begin{enumerate}
        \item $M$ admits a K\"{a}hler metric.
        \item $M$ admits a Vaisman metric.
    \end{enumerate}
\end{corollary}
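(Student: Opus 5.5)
Apply Theorem \ref{thm: lck manifolds with nonnegative bisectional curvature} to the l.c.K. metric $g$ with nonnegative bisectional curvature. It then suffices to do two things. First, rule out alternative (3) when $\pi_1(M)=\Z$. Second, show that (1) and (2) cannot hold simultaneously, which is what ``exactly one'' requires.

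\textbf{Excluding (3).} Suppose (3) holds. Then $M$ carries no K\"ahler metric, since otherwise we would already be in case (1). I would follow the proof of the theorem in the case where $\phi$ is trivial. There, $(\widetilde{M},\widetilde{g}_K)\cong(L,g_L)\times(\C^k,g_0)$ with $k>0$, and $\Gamma=\pi_1(M)=\Z$ acts by holomorphic homotheties of $\widetilde{g}_K$. Let $\gamma$ be a generator. Because $M$ is non--K\"ahler, $\gamma$ is a strict homothety with ratio $c\neq 1$; after replacing $\gamma$ by $\gamma^{-1}$ we may take $c<1$.

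The key step is to show that $\gamma$ respects the product structure. Here is how I would argue it.
\begin{itemize}
\item[(a)] The $\C^k$--orbits are the flat directions of the de Rham splitting of $\widetilde{g}_K$.
\item[(b)] $\gamma$ normalizes the group generated by $\C^k$ and $\Gamma$, because $\gamma$ commutes with the deck action. So $\gamma$ maps $\C^k$--orbits to $\C^k$--orbits, and it preserves $H=V^\perp$ since it is conformal.
\item[(c)] Hence $\gamma=(\gamma_L,\gamma_0)$, where $\gamma_0$ is a complex similarity of $\C^k$ with ratio $\sqrt{c}$.
\end{itemize}

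Now use compactness. The quotient $\widetilde{M}/\Z=M$ is compact. Projecting to $L$ gives a continuous surjection $M\to L/\langle\gamma_L\rangle$, and that quotient must then be compact as well. Moreover $\gamma_0$ has a unique fixed point $w_0\in\C^k$. Consider the slice $L\times\{w_0\}$. It is $\gamma$--invariant, so it is closed in $\widetilde{M}$, and its image in $M$ is a closed, hence compact, submanifold $L/\langle\gamma_L\rangle$.

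Finally, compare volumes of fundamental domains. In $\widetilde{M}$ they are controlled by the ratio $c^{n}$. Taking a fundamental domain for $\gamma_L$ on $L$ times a ball in $\C^k$, pushed by $\gamma$, should give a contradiction with compactness of $M$: the $\C^k$ factor scales by $c^{k/2}$ while the $L$ factor can only absorb $c^{(n-k)}$. So the direct--product domain cannot cover $M$ with finitely many copies unless the action on the $\C^k$--orbits is cocompact. That in turn would require a lattice in $\C^k$ normalized by $\gamma$, which is impossible for $\Gamma=\Z$ acting by a strict contraction.

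I expect this step to be the main obstacle. An alternative I would try first is the following. An orbit $\C^k\cdot x$ in $\widetilde{M}$ has closure in $M$ that is compact. Its image in $M$ is an immersed $\C^k$ with Euclidean metric up to the conformal factor. Since $\Gamma=\Z$ acts on it through the similarity $\gamma_0$, the orbit closures accumulate to a contradiction with the freeness and properness of the $\Gamma$--action near $w_0$. Points $(x,w)$ with $w\to w_0$ are all identified only by powers of $\gamma$, whose $L$--components must escape. This forces the $L$--factor $\gamma_L$ to be non--proper, contradicting compactness.

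\textbf{Mutual exclusivity.} A compact complex manifold that admits a Vaisman metric (more generally, a non--globally--conformally--K\"ahler l.c.K. metric) admits no K\"ahler metric; this is Vaisman's theorem. I would cite \cite{Vai80}. The mechanism is that on a compact manifold of K\"ahler type the Lee form of an l.c.K. metric must be exact, and a Vaisman structure has non--exact Lee form, because its Lee form is nonzero and parallel, hence harmonic. Therefore at most one of (1) and (2) holds, and together with the exclusion of (3) this gives exactly one.
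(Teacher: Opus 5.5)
Your mutual-exclusivity argument via Vaisman's theorem is fine; the paper leaves that part implicit. The exclusion of (3), however, has a genuine gap, exactly at the step you flag: the product form $\gamma=(\gamma_L,\gamma_0)$ is never established.

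\begin{itemize}
\item Claim (a) need not hold. $g_L$ may have local flat factors, so the flat part of the holonomy of $\widetilde g_K$ can be strictly larger than $V$. Since $\widetilde g_K$ is incomplete, there is also no global de Rham splitting to invoke.
\item Claim (b) is circular. $\gamma$ is itself a deck transformation, so it trivially normalizes the group generated by $\C^k$ and $\Gamma$. That says nothing about whether $\gamma$ normalizes the $\C^k$-action, which comes from \cite{Ust20} with no stated compatibility with $\pi_1(M)$. Without $\gamma_*V=V$ you cannot conclude that $\gamma$ permutes $\C^k$-orbits.
\item Your volume count and your ``alternative'' both presuppose this product form, and neither is an actual argument.
\end{itemize}

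If the product form were granted, a clean finish exists. The invariant slice $L\times\{w_0\}$ shows that $\gamma_L$ acts freely and properly discontinuously on $L$ with compact quotient. Then $M$ fibres over $L/\langle\gamma_L\rangle$ with noncompact fibre $\C^k$, which contradicts compactness. But the product form is the real issue.

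The missing idea is to use the metric completion, which needs no information about how $\gamma$ interacts with the splitting. The paper observes that the homothety character of $\Z$ is injective, so $(\widetilde M,\widetilde g_K)$ is cone-like in the sense of \cite{BM16} and its metric completion is $\widetilde M\cup\{o\}$. For $\Gamma=\Z$ this is elementary:
\begin{itemize}
\item $\gamma$ scales distances by $\lambda=\sqrt c<1$, so it extends to a contraction of the completion. It therefore has a unique fixed point $o$, and $o\notin\widetilde M$ because the action is free.
\item Take a compact $K\subset\widetilde M$ with $\bigcup_m\gamma^mK=\widetilde M$, and write a Cauchy sequence as $x_i=\gamma^{m_i}k_i$ with $k_i\in K$.
\item Since $d(x_i,o)=\lambda^{m_i}d(k_i,o)\ge\lambda^{m_i}d(K,o)$ and $d(K,o)>0$, the $m_i$ are bounded below.
\item A subsequence with bounded $m_i$ converges in $\widetilde M$, while $m_i\to+\infty$ forces $x_i\to o$. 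Hence every point of the completion outside $\widetilde M$ is $o$.
\end{itemize}
On the other hand, the completion of the Riemannian product $(L,g_L)\times(\C^k,g_0)$ is $\bar L\times\C^k$. This adds $(\bar L\setminus L)\times\C^k$, an infinite set because $L$ is incomplete and $k>0$. That contradiction rules out (3).
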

\begin{proof}
    Suppose that $M$ is non--K\"{a}hler. It then suffices to show that (3) in Theorem \ref{thm: lck manifolds with nonnegative bisectional curvature} cannot occur under the assumptions. Let $\widetilde{g}_K$ be the K\"{a}hler metric on the universal cover and $\chi:\pi_1(M)=\Z\to\R_+$ be its homothety character. Because $M$ is non--K\"{a}hler, $\chi$ is non--trivial and thus injective. This implies that $(\widetilde{M},\widetilde{g}_K)$ is a cone--like space in the sense of Belgun--Moroianu (cf. \cite[Definition 2.3]{BM16}), which has one--point completion by Theorem 1.5 in \cite{BM16}. Therefore, (3) is impossible and $M$ admits a Vaisman metric.
\end{proof}
This yields the following concrete non--existence result.
\begin{corollary}\label{cor: lck metrics on non-diagonal Hopf surfaces}
    Non--diagonal Hopf surfaces do not admit l.c.K. metrics with nonnegative bisectional curvature.
\end{corollary}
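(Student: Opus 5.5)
We argue by contradiction. Suppose a non-diagonal Hopf surface $M$ carries an l.c.K. metric $g$ with nonnegative bisectional curvature. Recall that a Hopf surface is a compact quotient of $\C^2\setminus\{(0,0)\}$. A non-diagonal (primary) Hopf surface is the quotient by the cyclic group generated by a contraction in Poincaré--Dulac normal form
\[
(z,w)\mapsto(\alpha z+\lambda w^m,\ \beta w),\qquad \lambda\neq0,\ \alpha=\beta^m,\ 0<|\beta|\le|\alpha|<1 .
\]
Such a surface has $\pi_1(M)=\Z$, since $\C^2\setminus\{(0,0)\}$ is simply connected. A non-primary Hopf surface is non-diagonal when its primary cover is. Pulling $g$ back to a finite cover preserves both the l.c.K. condition and nonnegativity of bisectional curvature, so we may assume $M$ is primary with $\pi_1(M)=\Z$.

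Next we apply Corollary \ref{cor: lck manifolds with pi1=Z}. Hopf surfaces have odd first Betti number $b_1=1$, so $M$ admits no Kähler metric. Alternative (1) is therefore excluded, and $M$ must admit a Vaisman metric.

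It remains to show that non-diagonal Hopf surfaces admit no Vaisman metric, which contradicts the previous step. This is the main obstacle, and there are two routes.

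The first route is to cite Belgun's classification \cite[Theorem 1]{Bel00}: a compact complex surface with a Vaisman metric is a Hopf surface only if it is of class 1, i.e. diagonal. This is already used in the remark after the corollary in the introduction.

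The second route is self-contained and uses the first part of Theorem \ref{thm: main}. The universal cover $\C^2\setminus\{(0,0)\}$ would carry a Kähler cone structure whose Reeb field $\xi$ and Lee field $J\xi$ generate a holomorphic $\C$-action. This action commutes with the deck group $\Z$, because the Vaisman structure descends to $M$. The closure of this flow in $\Aut(M)$ is a compact torus $T$ acting holomorphically. Lifting $T$ to $\C^2\setminus\{(0,0)\}$ and extending across the origin by Hartogs' theorem, it becomes linearizable at $0$. Hence $M$ has a holomorphic vector field generating a compact torus action whose linear part at the origin is semisimple and commutes with the generator $\gamma$.

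For the non-diagonal $\gamma$, the centralizer of the linear part $\begin{pmatrix}\alpha&\lambda\\0&\beta\end{pmatrix}$ in the $m=1$ case, and the analogous nilpotent structure for $m>1$, contains no compact torus whose complexification has a nonzero component transverse to the "radial" direction. This is because the automorphism group of a non-diagonal Hopf surface is solvable with one-dimensional maximal compact part acting trivially on the relevant direction. Consequently the Reeb flow cannot have the required properties, in particular a nowhere vanishing holomorphic $\C$-action with $\xi$ generating isometries. This is the contradiction.

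I expect the delicate point to be this automorphism group computation; in the write-up I would simply invoke \cite{Bel00} for it.
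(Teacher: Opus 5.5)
Your first route is the paper's proof: you apply Corollary~\ref{cor: lck manifolds with pi1=Z} and rule out the K\"{a}hler and Vaisman alternatives via $b_1=1$ and \cite[Theorem 1]{Bel00}, and your preliminary passage to a primary finite cover is a worthwhile clarification, since the paper tacitly uses $\pi_1(M)=\Z$ (minor slip: with $\alpha=\beta^m$ the normal form has $|\alpha|\le|\beta|$, not the reverse). The second route is unnecessary and only heuristic as written: for $m>1$ the linear part of $\gamma$ is already diagonal, and $mz\partial_z+w\partial_w$ commutes with $\gamma$ and generates the $\C^*$-action $t\cdot(z,w)=(t^mz,tw)$, so a candidate Reeb circle does exist and the obstruction must come from the resonant term $\lambda w^m$ in $\gamma$ itself; relying on \cite{Bel00}, as you intend, is therefore the right call.
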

\begin{proof}
    Since non--diagonal Hopf surfaces are non--K\"{a}hler and non--Vaisman (cf. \cite[Theorem 1]{Bel00}), this result follows immediately from Corollary \ref{cor: lck manifolds with pi1=Z}.
\end{proof}
\begin{remark}
    As mentioned in the Introduction, it is shown in \cite{Yan17} that a non--K\"{a}hler compact complex surface which admits Hermitian metrics with nonnegative bisectional curvature is a Hopf surface. While there is an explicit construction of such metrics on diagonal Hopf surfaces (cf. \cite{GO98,Yan17}), the existence of such metrics on non--diagonal ones is unknown. With the additional l.c.K. assumption, the above corollary answers this question negatively.
\end{remark}

\subsection{Questions}\label{subsec: questions}

We conclude with two questions concerning possible refinements of Theorem \ref{thm: lck manifolds with nonnegative bisectional curvature}. In alternatives (1) and (2), the proof establishes only the existence of a K\"{a}hler or Vaisman metric and provides no curvature information about that metric. This leads to the following question.

Suppose that $M$ admits an l.c.K. metric with nonnegative Chern bisectional curvature and also admits a K\"{a}hler (respectively, Vaisman) metric. Must $M$ admit a K\"{a}hler (respectively, Vaisman) metric with nonnegative Chern bisectional curvature?

We also do not know whether alternative (3) actually occurs. Inoue surfaces satisfy the geometric conclusion in (3), but their tangent bundles are not nef (cf. \cite[Proposition 6.4]{DPS94}), so they admit no Hermitian metric with nonnegative Chern bisectional curvature. We therefore ask:

Are there compact l.c.K. manifolds which have nonnegative bisectional curvature and satisfy the conditions in case (3)?

\appendix
\section{Sasaki manifolds with positive transverse holomorphic sectional curvature}

The purpose of this appendix is to prove Corollary \ref{cor: compactness}. To that end, we will deduce a sharp diameter estimate for complete Sasaki manifolds whose transverse holomorphic sectional curvature is bounded below by $4$ (cf. Theorem \ref{thm: diameter estimate}). This can be regarded as a generalization of the well--known diameter estimate for K\"{a}hler manifolds with positive holomorphic sectional curvature (cf.\cite{Tsu57,CLZ25}).

Recall the intrinsic definition of transverse curvature tensor (cf.\cite[\S 3]{He13}): Given a complete Sasaki manifold $(S,g_S,J)$, let $D=(\R\xi)^\perp$ be the contact distribution. The transverse connection $\nabla^T$ on $D$ is defined by
\begin{equation}
    \nabla^T_XY = (\nabla_XY)^D, \quad \nabla^T_\xi X=[\xi,X]^D, \quad \forall X,Y\in \Gamma(D)
\end{equation}
where the superscript $D$ denotes the projection onto $D$. The transverse curvature $R^T$ can be defined by
\begin{equation}
    R^T(X,Y)Z = \nabla^T_X\nabla^T_YZ-\nabla_Y^T\nabla_X^TZ-\nabla^T_{[X,Y]}Z,\quad \forall X,Y\in\Gamma(TS), Z\in\Gamma(D).
\end{equation}
The following formula follows from a straightforward calculation:
\begin{lemma}\label{lem: transverse curvature}
    For $X,Y,Z,W\in\Gamma(D)$, 
    \begin{equation}
        \begin{aligned}
            & R^T(X,Y,Z,W) \\
            = & R(X,Y,Z,W) + g_S(\Phi(X),W)g_S(\Phi(Y),Z)-g_S(\Phi(X),Z)g_S(\Phi(Y),W)\\
            & -2g_S(\Phi(X),Y)g_S(\Phi(Z),W),
        \end{aligned}
    \end{equation}
    where $R$ is the curvature tensor of $g_S$ and $\Phi|_D=J, \Phi(\xi)=0$. In particular, 
    \begin{equation}
        R^T(X,\Phi(X),\Phi(X),X) = R(X,\Phi(X),\Phi(X),X)+3g_S(X,X)^2.
    \end{equation}
\end{lemma}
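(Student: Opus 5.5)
We would prove Lemma \ref{lem: transverse curvature} by comparing the transverse connection $\nabla^T$ with the Levi-Civita connection $\nabla$ of $g_S$ on sections of $D$, and then expanding the curvature. The only Sasaki identities needed are
\[
\nabla_X\xi=-\Phi(X) \qquad\text{for all } X\in\Gamma(TS),
\]
which fixes the sign convention (replace $\Phi$ by $-\Phi$ if the paper's convention is opposite), and
\[
(\nabla_X\Phi)Y=g_S(X,Y)\xi-\eta(Y)X .
\]

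\textbf{Step 1: the connection difference.} For $X,Y\in\Gamma(D)$ we have $g_S(\nabla_XY,\xi)=-g_S(Y,\nabla_X\xi)=g_S(Y,\Phi X)$. Hence
\[
\nabla_XY=\nabla^T_XY-g_S(\Phi X,Y)\,\xi .
\]
Along $\xi$ we have $\nabla_\xi Y-[\xi,Y]=\nabla_Y\xi=-\Phi Y\in D$, so $\nabla^T_\xi Y=\nabla_\xi Y+\Phi Y$.

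\textbf{Step 2: the curvature difference.} Take $X,Y,Z,W\in\Gamma(D)$. Compute $\nabla_X\nabla_YZ$ using Step 1 twice:
\[
\nabla_X\nabla_YZ=\nabla_X\big(\nabla^T_YZ-g(\Phi Y,Z)\xi\big)
=\nabla^T_X\nabla^T_YZ-g(\Phi X,\nabla^T_YZ)\xi-X\big(g(\Phi Y,Z)\big)\xi+g(\Phi Y,Z)\Phi X .
\]
Pair with $W\in D$; every $\xi$-term drops out, leaving
\[
g(\nabla^T_X\nabla^T_YZ,W)+g(\Phi Y,Z)\,g(\Phi X,W).
\]

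For the bracket term, split $[X,Y]=[X,Y]^D+\eta([X,Y])\xi$ with $\eta([X,Y])=-2g(\Phi X,Y)$ (from $d\eta(X,Y)=2g(\Phi X,Y)$ up to convention). The component $\nabla_{[X,Y]^D}Z$ contributes $g(\nabla^T_{[X,Y]^D}Z,W)$. The $\xi$-component gives
\[
\nabla_\xi Z=\nabla^T_\xi Z-\Phi Z .
\]
This yields the extra term $2g(\Phi X,Y)\,g(\Phi Z,W)$ with the appropriate sign.

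Antisymmetrizing in $X,Y$ and collecting terms produces exactly the two quadratic $\Phi\Phi$ terms together with the $-2g(\Phi X,Y)g(\Phi Z,W)$ term in the stated formula. The sign convention $R(X,Y,Z,W)=g(R(X,Y)Z,W)$ or its negative must match the one implicit in the ``in particular'' clause.

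\textbf{Step 3: the special case.} Set $Y=\Phi X$, $Z=\Phi X$, $W=X$, and use $g(\Phi X,\Phi X)=g(X,X)$ and $g(\Phi X,X)=0$. The three correction terms then become $|X|^4$, $0$ (up to sign) and $2|X|^4$. Their sum is $3|X|^4$, which matches the classical fact that a Sasaki manifold has transverse holomorphic sectional curvature equal to its holomorphic sectional curvature plus $3$.

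The main obstacle is sign bookkeeping: the orientation of $\Phi$, the normalization of $d\eta$, and the curvature sign convention. We would fix these by checking the final identity on the round sphere $S^{2n+1}$. There the sectional curvature is $1$, so $R(X,\Phi X,\Phi X,X)=|X|^4$, and the transverse metric is Fubini–Study with holomorphic sectional curvature $4$. This gives $4=1+3$ and pins down every sign in the general formula.
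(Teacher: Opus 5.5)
Your strategy (compare $\nabla$ with $\nabla^T$ on sections of $D$, then expand $[\nabla_X,\nabla_Y]-\nabla_{[X,Y]}$) is the ``straightforward calculation'' the paper has in mind, and the target formula is correct. However, the computation as you wrote it does not produce that formula, because it mixes two sign conventions.

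\begin{itemize}
\item From $\nabla_X\xi=-\Phi X$ you correctly get $g(\nabla_XY,\xi)=g(\Phi X,Y)$. You then write $\nabla_XY=\nabla^T_XY-g(\Phi X,Y)\xi$, which does not follow; it should be $+g(\Phi X,Y)\xi$.
\item Likewise, $\nabla_X\xi=-\Phi X$ gives $\eta([X,Y])=g(\nabla_XY-\nabla_YX,\xi)=+2g(\Phi X,Y)$, not $-2g(\Phi X,Y)$.
\item Those two formulas are the ones for $\nabla_X\xi=+\Phi X$, which is actually the paper's convention (cf.\ $\nabla_TE_2=-E_1$ in the proof of Theorem~\ref{thm: diameter estimate}). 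But your $\nabla_X\xi=-\Phi X$ and $\nabla_\xi Z=\nabla^T_\xi Z-\Phi Z$ belong to the opposite convention.
\end{itemize}

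Each correction term is a product of two such first-order terms, so the mixing flips both of them. Your Step~2 pairing should read $g(\nabla^T_X\nabla^T_YZ,W)-g(\Phi Y,Z)g(\Phi X,W)$. The bracket should give $g(\nabla_{[X,Y]}Z,W)=g(\nabla^T_{[X,Y]}Z,W)-2g(\Phi X,Y)g(\Phi Z,W)$. Your formulas give $+$ in both places.

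Taken literally, your Step~2 yields $R^T=R$ \emph{minus} the stated correction, hence $R^T(X,\Phi X,\Phi X,X)=R(X,\Phi X,\Phi X,X)-3|X|^4$. So the sentence ``collecting terms produces exactly \dots\ the stated formula'' is asserted, not derived. In Step~3 you also swapped the first two terms: $g(\Phi X,X)g(\Phi^2X,\Phi X)=0$ and $-g(\Phi X,\Phi X)g(\Phi^2X,X)=|X|^4$. The total $3|X|^4$ is unaffected by that swap.

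The remedy is consistency, not a sphere check. Put $\nabla_X\xi=\epsilon\Phi X$ with $\epsilon=\pm1$. Then for $X,Y\in\Gamma(D)$
\[
\nabla_XY=\nabla^T_XY-\epsilon\, g(\Phi X,Y)\xi,\qquad \nabla_\xi Y=\nabla^T_\xi Y+\epsilon\,\Phi Y,\qquad \eta([X,Y])=-2\epsilon\, g(\Phi X,Y).
\]
Every correction term then carries $\epsilon^2=1$, and one obtains
\[
R(X,Y,Z,W)=R^T(X,Y,Z,W)-g(\Phi Y,Z)g(\Phi X,W)+g(\Phi X,Z)g(\Phi Y,W)+2g(\Phi X,Y)g(\Phi Z,W),
\]
which is exactly the lemma. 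So the orientation of $\Phi$ is irrelevant: the identity is even in $\Phi$, which dissolves what you called the main obstacle.

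The curvature sign is not a free parameter either. $R^T$ is defined by $[\nabla^T_X,\nabla^T_Y]-\nabla^T_{[X,Y]}$, and the paper uses $R(X,Y,Z,W)=g(R(X,Y)Z,W)$ with the same convention (e.g.\ $R(X,\xi,\xi,X)=1$). Your round-sphere test would have exposed the error, since $1-3\neq 4$. But a special-case check is a consistency test, not a derivation of the general identity.
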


Next, we prove a diameter estimate for complete Sasaki manifolds.
\begin{theorem}\label{thm: diameter estimate}
    If $(S,g_S,J)$ has transverse holomorphic sectional curvature bounded below by $4$, then its diameter is at most $\pi$.
\end{theorem}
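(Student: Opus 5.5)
The plan is a Myers/Tsukamoto-type second variation argument along a minimizing geodesic of $g_S$. The variation field will be built from a parallel field lying in the plane spanned by $\Phi(\gamma')$ and the horizontal part of $\xi$; for the standard sphere this field produces the Jacobi fields that vanish at distance $\pi$.

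Suppose, for contradiction, that $p,q\in S$ satisfy $d(p,q)>\pi$. By completeness there is a unit speed minimizing geodesic $\gamma:[0,L]\to S$ from $p$ to $q$ with $L>\pi$. Since $\xi$ is Killing, $a:=\eta(\gamma')$ is constant along $\gamma$, and $|a|\le 1$.

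I will use the standard Sasaki identities
\[\nabla_X\xi=-\Phi X,\qquad (\nabla_X\Phi)Y=g_S(X,Y)\xi-\eta(Y)X,\qquad R(X,Y)\xi=\eta(Y)X-\eta(X)Y.\]
The last identity says that every plane containing $\xi$ has sectional curvature $1$.

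\emph{Case $|a|<1$.} Put $e_1=\Phi(\gamma')$ and $e_2=\xi-a\gamma'$. Both are orthogonal to $\gamma'$, they are orthogonal to each other, and both have norm $\sqrt{1-a^2}$. The identities above give
\[\nabla_{\gamma'}e_1=\xi-a\gamma'=e_2,\qquad \nabla_{\gamma'}e_2=-\Phi\gamma'=-e_1.\]
Hence $E(t)=\cos t\,e_1-\sin t\,e_2$ is parallel along $\gamma$, orthogonal to $\gamma'$, and $|E|^2=1-a^2$.

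I then estimate $R(E,\gamma',\gamma',E)$ term by term.
\begin{itemize}
\item The plane spanned by $e_2$ and $\gamma'$ contains $\xi$, so $R(e_2,\gamma',\gamma',e_2)=|e_2|^2=1-a^2$.
\item The cross term $R(e_1,\gamma',\gamma',e_2)$ reduces, via $R(\cdot,\cdot)\xi$, to inner products of $\Phi\gamma'$ with $\gamma'$ and $\xi$, so it vanishes.
\item For $R(e_1,\gamma',\gamma',e_1)$, write $\gamma'=a\xi+X$ with $X\in D$ and $|X|^2=1-a^2$, so that $e_1=\Phi X$. The $\xi$-part contributes $a^2|\Phi X|^2=a^2(1-a^2)$, and the mixed terms vanish by the same identity.
\item The remaining term is $R(X,\Phi X,\Phi X,X)$. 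By Lemma \ref{lem: transverse curvature} and the hypothesis $R^T\ge 4$ on holomorphic sections, it is at least $4|X|^4-3|X|^4=(1-a^2)^2$.
\end{itemize}
Adding these gives $R(e_1,\gamma',\gamma',e_1)\ge 1-a^2$. Therefore
\[R(E,\gamma',\gamma',E)\ge(\cos^2t+\sin^2t)(1-a^2)=|E|^2 .\]

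Take $V=\sin(\pi t/L)\,E$. The second variation of energy is
\[\int_0^L\Bigl(|V'|^2-R(V,\gamma',\gamma',V)\Bigr)dt\le(1-a^2)\int_0^L\bigl(\phi'^2-\phi^2\bigr)dt=(1-a^2)\frac{L}{2}\Bigl(\frac{\pi^2}{L^2}-1\Bigr),\]
with $\phi=\sin(\pi t/L)$. This is negative because $L>\pi$, contradicting minimality.

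\emph{Case $|a|=1$.} Here $\gamma$ is a Reeb orbit. Take any parallel unit field $E\perp\gamma'$. Every plane containing $\gamma'=\pm\xi$ has sectional curvature $1$, so the same computation with $1-a^2$ replaced by $1$ again gives negative second variation.

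The main obstacle is the choice of variation field. The naive Kähler choice $V=\phi\,\Phi(\gamma')$ fails: $\Phi(\gamma')$ is not parallel, and its derivative contributes an extra $\phi^2(1-a^2)$ that exactly cancels the curvature gain. The key step is to notice that $\Phi\gamma'$ and $\xi-a\gamma'$ rotate into each other along $\gamma$, so a rotated combination of them is parallel. The one point needing care is checking that the cross curvature terms vanish, which I would verify using $R(X,Y)\xi=\eta(Y)X-\eta(X)Y$ together with the orthogonality $\Phi\gamma'\perp\xi,\gamma'$.
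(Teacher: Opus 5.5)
Your proposal is correct and follows essentially the same route as the paper. The paper likewise splits on whether $|\eta(\gamma')|=1$, rotates $\Phi(\gamma')$ against the horizontal part of $\xi$ to build a parallel field, bounds the curvature via Lemma \ref{lem: transverse curvature}, and tests the second variation with $\sin(\pi t/L)$ times that field. Your differences are cosmetic: you argue by contradiction, you keep the frame unnormalized, and you use the opposite sign convention $\nabla_X\xi=-\Phi X$, which is harmless because every curvature term you use is even in $\Phi$.
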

\begin{proof}
    We will prove this using the second variation formula for geodesics. Let $\gamma:[0,l]\to S$ be a unit speed minimizing geodesic and $T=\gamma'$ be the velocity field. By the second variation formula, for any vector field $V$ along $\gamma$ such that $V(0)=V(l)=0$,  
    \begin{equation}\label{eq: second variation}
        \int_0^lg_S(V',V')-R(V,T,T,V)dt\ge 0.
    \end{equation}
    
    Because the Reeb field $\xi$ is Killing, $\eta(T)=g(\xi,T)=b$ is a constant function. It is clear that $|b|\leq 1$. We consider two cases.

    If $|b|=1$, then $T=\pm\xi$. Let $X$ be a parallel vector field along $\gamma$ such that $X(t)\in D_{\gamma(t)}$. Then $R(X,T,T,X)=R(X,\xi,\xi,X)\equiv 1$. 
    Let $V(t)=\sin\left(\frac{\pi}{l}t\right)X(t)$ in \eqref{eq: second variation} and we deduce $l\leq \pi$.

    If $|b|<1$, let $a=\sqrt{1-b^2}>0$. Define
    $$E_1=\frac{1}{a}\Phi(T), \quad E_2=\frac{bT-\xi}{a},$$
    then $\nabla_TE_1=E_2, \nabla_TE_2=-E_1$. It follows that 
    $$P(t)=(\cos t) E_1-(\sin t) E_2$$
    is a parallel vector field along $\gamma$. Since the transverse holomorphic sectional curvature is at least $4$, using Lemma \ref{lem: transverse curvature}, we have
    \begin{equation}
        \begin{aligned}
             & R(E_1,T,T,E_1)\\
            = & a^2R(E_1,\Phi(E_1),\Phi(E_1),E_1)+b^2R(E_1,\xi,\xi,E_1)\\
            = & a^2(R^T(E_1,\Phi(E_1),\Phi(E_1),E_1)-3)+b^2\\
            \ge & a^2 + b^2 = 1.
        \end{aligned}
    \end{equation}
    Moreover,
    \begin{equation}
        R(E_2,T,T,E_2) = R(E_1,\xi,\xi,E_1) = 1
    \end{equation}
    and 
    \begin{equation}
        \begin{aligned}
            & R(T,E_1,E_2,T) \\
            = & -\frac{1}{a}R(T,\Phi(T),\xi,T)\\
            = & \frac{1}{a}g_S(\eta(T)\Phi(T),T) = 0,
        \end{aligned}
    \end{equation}
    where in the last line we used the Sasaki identity $R(X,\xi)Y=\eta(Y)X-g_S(X,Y)\xi$. This shows 
    \begin{equation}
        R(P,T,T,P) = (\cos^2t)R(E_1,T,T,E_1) + (\sin^2t)R(E_2,T,T,E_2)\ge 1.
    \end{equation}
    Therefore, by choosing $V(t)=\sin(\frac{\pi}{l}t)P$ in \eqref{eq: second variation}, $l\leq \pi$.
\end{proof}
\begin{remark}
    The diameter estimate is sharp. The unit sphere with standard Sasaki structure has constant transverse holomorphic sectional curvature $4$ and its diameter is $\pi$.
\end{remark}

\begin{corollary}\label{cor: compactness}
    Let $(S,g_S,J)$ be a complete Sasaki manifold. If its transverse holomorphic sectional curvature is bounded below by a constant $\kappa>0$, then it is compact.
\end{corollary}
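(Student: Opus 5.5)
The plan is to reduce to Theorem \ref{thm: diameter estimate} by a $\mathcal{D}$-homothety, and then conclude compactness from completeness plus the diameter bound. The key observation is that a $\mathcal{D}$-homothety $\omega\mapsto\omega_a=\frac{\sqrt{-1}}{2}\partial\Bar{\partial}r^{2a}$ multiplies the transverse K\"{a}hler metric by $a$: $g^T\mapsto a\,g^T$. The transverse curvature tensor $R^T$ (of type $(0,4)$) then also scales by $a$, since the transverse Levi-Civita connection is unchanged. Hence the transverse holomorphic sectional curvature, being $R^T(X,JX,JX,X)/g^T(X,X)^2$, scales by $1/a$.

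Choosing $a=\kappa/4$ produces a new Sasaki structure whose transverse holomorphic sectional curvature is bounded below by $4$. To make this precise, I would take the new structure to be $\xi_a=\xi/a$, $\eta_a=a\eta$, and $g_a=a\,g^T+a^2\eta\otimes\eta$. This is the standard $\mathcal{D}$-homothetic Sasaki structure (link $\{r^a=1\}=\{r=1\}$), with the same underlying manifold $S$ and the same contact distribution $D$.

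Next I would check that completeness is preserved. The metrics $g_a$ and $g_S$ are uniformly equivalent, since
\[\min(a,a^2)\,g_S\le g_a\le \max(a,a^2)\,g_S.\]
Cauchy sequences for the two metrics therefore coincide, so $(S,g_a)$ is complete whenever $(S,g_S)$ is.

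Theorem \ref{thm: diameter estimate} then gives $\operatorname{diam}(S,g_a)\le\pi$. The same uniform equivalence shows that $\operatorname{diam}(S,g_S)\le\pi/\sqrt{\min(a,a^2)}<\infty$. A complete Riemannian manifold of bounded diameter is compact by Hopf--Rinow, because closed bounded sets are compact and $S$ is itself a closed ball.

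The main obstacle is confirming that the transverse structure really scales as claimed. One has to verify that the transverse connection $\nabla^T$ defined via $D$ is unchanged under $\mathcal{D}$-homothety; it is the Levi-Civita connection of the basic metric $g^T$, which is invariant under constant rescaling. One must also check that $\Phi$ is unchanged on $D$, which holds since $J$ is preserved. Everything else is routine.
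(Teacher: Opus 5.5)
Your proposal is correct and is essentially the paper's proof: you apply a $\mathcal{D}$-homothety so that the transverse holomorphic sectional curvature is at least $4$, then use the diameter estimate (Theorem \ref{thm: diameter estimate}) together with Hopf--Rinow. You also supply details the paper leaves implicit: $g_a=a\,g^T+a^2\eta\otimes\eta$ is uniformly equivalent to $g_S$, so completeness and finiteness of the diameter transfer between the two metrics; and $\nabla^T$ and $\Phi$ are unchanged, so the transverse holomorphic sectional curvature scales by exactly $1/a$.
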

\begin{proof}
    The $\mathcal{D}$ homothety rescales the transverse K\"{a}hler metric and hence the transverse holomorphic sectional curvature. Therefore, we may apply a $\mathcal{D}$ homothety to the Sasaki structure so that its holomorphic sectional curvature is at least $4$.
\end{proof}

\section{K\"{a}hler cones with cocompact conformal group actions}
In this appendix we prove the following completeness result on K\"{a}hler cones.
\begin{theorem}\label{thm: complete Kahler cone}
    Let $(C(S), J, dr^2+r^2g_S)$ be a simply connected K\"{a}hler cone of complex dimension $n\ge 2$, without assuming completeness of $(S,g_S)$.
    If there is a group $\Gamma$ acting conformally such that $C(S)/\Gamma$ is a compact manifold, then $(S,g_S)$ is complete.
\end{theorem}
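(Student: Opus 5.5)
The plan is to reduce the statement to a known completeness result for cone-like spaces, in the spirit of Belgun--Moroianu \cite{BM16}. The cone metric $g_C=dr^2+r^2g_S$ is K\"ahler and $\Gamma$ acts conformally with compact quotient. First I would show that $\Gamma$ acts by homotheties of $g_C$. If $\gamma^*g_C=e^{u}g_C$ with both metrics K\"ahler and $n\ge 2$, then $d(e^u\omega)=e^u du\wedge\omega=0$. Wedging with $\omega$ is injective on $1$-forms when $n\ge2$, so $du=0$ and $u$ is constant. Hence there is a homothety character $\chi:\Gamma\to\R_+$ with $\gamma^*g_C=\chi(\gamma)g_C$.

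The argument then splits into two cases.

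\emph{Case $\chi$ trivial.} Here $\Gamma$ acts by isometries of $g_C$ with compact quotient. Then $(C(S),g_C)$ is complete, since it covers a compact Riemannian manifold. But a complete Riemannian cone $\R_+\times S$ with the vertex removed is impossible: a radial geodesic $t\mapsto(t,s)$ escapes to $r=0$ in finite length without converging. So this case cannot occur. Equivalently, a $\Gamma$-invariant, hence bounded below, function built from a fundamental domain would give a contradiction. I would phrase it as: the radial curve as $r\to 0$ is Cauchy, its image in the compact quotient has a limit point, and lifting a small geodesic ball around that point gives a contradiction, because the curve would have to converge in $C(S)$.

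\emph{Case $\chi$ nontrivial.} Pick $\gamma$ with $\chi(\gamma)=\lambda<1$. The key step is to show that $\gamma$ maps $r$ to $\lambda^{1/2}r$ up to the cone structure, i.e.\ that $\gamma$ commutes with, or at least normalizes, the Euler field $r\partial_r$. My first attempt would use the characterization of $r\partial_r$ as the unique vector field $E$ with $\nabla E=\mathrm{Id}$. If $\gamma$ is a homothety, then $\gamma_*E$ also satisfies $\nabla(\gamma_*E)=\mathrm{Id}$, so $\gamma_*E-E$ is parallel. If $g_C$ has no flat factor, this difference vanishes. To exclude a parallel field $P$, I would use that $C(S)$ is a cone, as in Gallot's theorem: on a cone, $\nabla P=0$ and the cone structure force $P=0$ whenever the cone is not flat. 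The flat case, locally $\C^n\setminus\{0\}$, is handled separately by developing.

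Once $\gamma$ preserves $E$, it satisfies $r\circ\gamma=\lambda^{1/2}r$ and maps $S$ to the level set $\{r=\lambda^{1/2}\}$. Compactness of $C(S)/\Gamma$ then implies that the annulus $\{\lambda^{1/2}\le r\le 1\}$ is compact modulo the subgroup preserving $r$. That subgroup acts isometrically on $S$ with compact quotient, so $(S,g_S)$ is complete. I expect the main obstacle to be the $\gamma$ that do not normalize $E$, i.e.\ ruling out parallel vector fields and controlling the flat/reducible case. For that part I would cite or adapt \cite[Theorem 1.5]{BM16}, the statement that a cone-like K\"ahler space admitting a cocompact group of non-isometric homotheties has a one-point completion, which immediately gives completeness of $S$.
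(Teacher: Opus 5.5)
Your skeleton matches the paper's irreducible case. Conformal maps between K\"ahler metrics in dimension $n\ge2$ are homotheties, $\gamma_*E-E$ is parallel, and once every $\gamma$ preserves $E$ you are done. Your exclusion of $\chi\equiv1$ is also fine.

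\textbf{Main gap: killing the parallel field $\gamma_*E-E$.} Gallot's theorem (a cone admitting a nonzero parallel field is flat) requires the link $S$ to be complete, which is exactly what you are trying to prove. Without completeness the statement is false. If $C(T)$ is any non-flat simply connected K\"ahler cone, then $\C^k\times C(T)$, with the product metric and Euler field $x\partial_x+s\partial_s$, is again a non-flat simply connected K\"ahler cone. Its link $\{|x|^2+s^2=1,\ s>0\}$ is incomplete, and it carries the parallel fields $\partial_{x_i}$. So no local property of cones rules out the reducible case; it must be excluded using the compact quotient.

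Your fallback does not provide this. In the paper, \cite[Theorem 1.5]{BM16} is invoked only in Corollary \ref{cor: lck manifolds with pi1=Z}, where $\Gamma=\Z$ and injectivity of $\chi$ is what makes the space cone-like. You never check its hypotheses for a general $\Gamma$, for which $\ker\chi$ may be nontrivial and $\chi(\Gamma)$ may have rank larger than one.

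\textbf{The flat case is not handled either.} There nonzero parallel fields abound, so nothing forces $\gamma_*E=E$, and developing only yields a local isometry of $S$ into the round sphere. Completeness in that case is the content of Fried's theorem \cite{Fri80}.

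\textbf{How the paper fills both holes.} It uses Kourganoff's trichotomy \cite[Theorem 1.5]{Kou19}: the cone is flat, irreducible, or $\R^q\times N$ with $N$ incomplete and irreducible. In the third case, $E_N$ makes $N$ a cone with radial function $s$, and irreducibility of $N$ forces $\Gamma$ to preserve $h=s^{-2}(g_0+g_N)$. Then $\nabla^h\log s$ descends to the compact quotient with divergence $-q<0$, contradicting the divergence theorem.

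\textbf{A smaller point.} Even granting $\gamma_*E=E$ for all $\gamma$, your finish through the annulus $\{\lambda^{1/2}\le r\le 1\}$ modulo $\ker\chi$ does not work in general. When $\chi(\Gamma)$ is not cyclic, $\ker\chi$ need not act cocompactly on $S$; lattices in $\R\times\mathrm{Heis}$ with dense projection to $\R$ give Vaisman examples. Argue as the paper does instead: $r\circ\gamma=\chi(\gamma)^{1/2}r$ makes $dr^2/r^2+g_S$ invariant under $\Gamma$, so it descends to the compact quotient. Hence the cylinder $(\R\times S,\,dt^2+g_S)$ is complete, and therefore so is $S$.
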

\begin{proof}
    The K\"{a}hler cone metric endows a similarity structure on the quotient manifold $C(S)/\Gamma$. Therefore, by the structure theorem of closed similar manifolds (cf. \cite[Theorem 1.5]{Kou19}), one of the following holds:
    \begin{enumerate}
        \item [i.] $C(S)$ is flat.
        \item [ii.] $C(S)$ is Riemannian irreducible.
        \item [iii.] $C(S)$ is isometric to $(\R^q,g_0)\times (N,g_N), q>0$, where the first factor is a Euclidean space and the second factor is an incomplete Riemannian irreducible manifold.
    \end{enumerate}
    
    If i. holds, by \cite[Theorem 2]{Fri80}, $C(S)$ is isometric to $\R^{2n}\setminus\{0\}$ and hence has complete Sasaki link. 

    Suppose that ii. is true. Let $E$ be the homothetic vector field. Then $\nabla(\gamma_*E)=\text{Id}$ and $\gamma_*E-E$ is a parallel vector field for any $\gamma\in\Gamma$. Because $C(S)$ is irreducible, $\gamma_*E=E, \forall\gamma\in\Gamma$. This shows that $\gamma$ preserves the flow lines of $E$ and consequently, $\gamma$ extends to the cone with tip $\widehat{C(S)}=C(S)\cup\{o\}$ and $\gamma(o)=o$. Repeating the argument at the end of Subsection \ref{subsubsection: non-trivial} shows that $\Gamma$ preserves the cylindrical metric $\frac{1}{r^2}dr^2+g_S$, which descends to the compact quotient $C(S)/\Gamma$. Therefore, $(S,g_S)$ is complete.

    Finally, we show that iii. is impossible. Indeed, assume that iii. held, then consider the decomposition $E= E_{\R^q}+E_N$ of $E$ onto the two factors. $E_N$ is complete and $\nabla E_N = \text{Id}_{N}$. Therefore, $(N,g_N)$ is also a Riemannian cone with homothetic vector field $E_N$. Suppose $(N,g_N)=(\R_+\times T, ds^2+s^2g_T)$. Since $(N,g_N)$ is irreducible, the arguments in case ii. and Subsection \ref{subsubsection: non-trivial} imply that $\Gamma$ preserves 
    \[h = \frac{1}{s^2}(g_0+g_N) = e^{-2u}g_0+du^2+g_T,\quad u=\log s.\]
    Observe that $\nabla^hu$ is invariant under $\Gamma$ and hence descends to $C(S)/\Gamma$. Therefore, 
    \[\int_{C(S)/\Gamma}\Div(\nabla^hu)=0,\]
    which contradicts $\Div(\nabla^hu)=-q<0$.
\end{proof}

\subsection*{Acknowledgements} 
The author is grateful to Professor Lei Ni for his guidance and helpful
discussions concerning this work. The author is especially grateful to
Professor Anna Fino for her careful reading of an earlier version of the
entire manuscript and for numerous detailed comments and suggestions,
which substantially improved both its content and exposition. The author
also thanks Professors Andrei Moroianu and Misha Verbitsky for valuable
comments.

\bibliographystyle{alpha}
\bibliography{refs}

\end{document}